\documentclass[11pt]{article}
\usepackage[T1]{fontenc}
%\usepackage{lmodern}
%\synctex=0
\usepackage{times}
\usepackage{mathdots}
\usepackage{multirow}
\usepackage{textcomp}
\usepackage{amssymb,amscd,amsthm,latexsym}
\usepackage{amsmath}
\usepackage{graphicx}
%\graphicspath{ {graphs/} }
%\usepackage{wrapfig}
\usepackage{float}
\usepackage{amssymb}
\usepackage{epstopdf}
\usepackage{verbatim}
\newcommand{\exampleqed}{\hfill \ensuremath{\square}}
\topmargin -1in
%\headsep 20pt
%\footskip 20pt
%\textheight 8.5in
%\textwidth 6.25in
%\usepackage{style}

% Added to maintain consistency with new section.
% \setlength{\parindent}{0pt}
% \setlength{\parskip}{1pt plus2pt}

\newtheorem{theorem}{Theorem}[section]

\theoremstyle{definition}

\newtheorem{algorithm}[theorem]{Algorithm}
\newtheorem{example}[theorem]{Example}

\newtheorem{rem}[theorem]{Remark}

\def\n{n \times n}

\def\l{\lambda}

\def\spacer#1{\vbox{\vspace*{#1}\hbox{\vphantom{M}}}}
\def\T{{\rm T}}
%
% Frontmatter
%
%&&&&&&&&&&&&&&&&&&&&&&&&&&&&&&&&&&&&&&&&&&&&&&&&&&&&&&&&&&&&&&&&&&&&&&&&&&&&&&&&&&&&&&&&&&&&&&&&&&&&&&&&&&&&&&
\title{\vskip10mm An application of Gelfand's formula in approximating the roots of polynomials
}
%{Some matrix semi norms and their application in the computation of the real polynomial roots}
\author{Frank J. Hall\\ Department of Mathematics and Statistics\\ Georgia State University\\
	Atlanta, GA, USA\\ fhall@gsu.edu
		\, \\~~~\\
	Rachid M. Marsli
	\\ Preparatory Math Department\\
	King Fahd University of Petroleum and Minerals\\ Dhahran, KSA \\ rmarsliz@kfupm.edu.sa
	\, \\~~~\\
	Michael A. Stewart\\ Department of Mathematics and Statistics\\ Georgia State University\\
	Atlanta, GA, USA\\ mastewart@gsu.edu
}
\date{\today}
%&&&&&&&&&&&&&&&&&&&&&&&&&&&&&&&&&&&&&&&&&&&&&&&&&&&&&&&&&&&&&&&&&&&&&&&&&&&&&&&&&&&&&&&&&&&&&&&&&&&&&&&&&&&&&&
%
%
%
\begin{document}
	%
	%
	%
	%&&&&&&&&&&&&&&&&&&&&&&&&&&&&&&&&&&&&&&&&&&&&&&&&&&&&&&&&&&&&&&&&&&&&&&&&&&&&&&&&&&&&&&&&&&&&&&&&&&&&&&&&&&
	\maketitle
	\begin{abstract}
	\noindent
	The purpose of this paper is to show how Gelfand's formula and balancing can be used to improve the upper and lower bounds of the spectrum of a companion matrix associated with a given real or complex polynomial. Examples and other related ideas are provided.
	\end{abstract}
	\noindent
	{\it AMS Subj. Class.:} 15A18; 15A42; 65F30
	\vskip2mm
	\noindent
	{\it Keywords:}
	Companion matrix; matrix norm; eigenvalue; roots of a real polynomial; monic reversal polynomial
	%
	%
	%&&&&&&&&&&&&&&&&&&&&&&&&&&&&&&&&&&&&&&&&&&&&&&&&&&&&&&&&&&&&&&&&&&&&&&&&&&&&&&&&&&&&&&&&&&&&&&&&&&&&&&&&&&
	\section{Introduction}
	\begin{comment}
		An important classical topic is the approximation of the roots of polynomials with real or more generally complex coefficients.  Two fairly recent articles which make use of various companion matrices are the interesting and informative papers \cite{TDP} by De Terán, Dopico, Perez and \cite{V} by Vander Meulen and Vanderwoerd. The reader can also see the references within those two papers, but we specifically mention the numerical work \cite{BF} by Bini and Fiorentino, and the paper \cite{Mel} using Gershgorin discs by Melman. The purpose of this present paper is the show a different way to bound and locate the roots of real polynomials, given the partial information of one particular nonzero real root.
	\end{comment}

	An important classical topic is the approximation of the roots of polynomials with real or, more generally, complex coefficients. The connection between the eigenvalues of a matrix and the roots of a polynomial is a foundational concept in linear algebra and numerical analysis. For a given polynomial
	\[
	p(x) = a_nx^n + a_{n-1}x^{n-1} + \cdots + a_0,
	\]
	the companion matrix \(M\) is defined such that its eigenvalues correspond precisely to the roots of \(p(x)\). This correspondence arises from the characteristic polynomial of \(M\), which is identical to \(p(x)/a_n\), with $a_n$ being the leading coefficient of $p(x)$. The eigenvalues of \(M\) are exactly the roots of $p(x)$. This fundamental relationship serves as a cornerstone for numerous theoretical and computational studies.

	The problem of computing polynomial zeros is numerically challenging.  Zeros of higher multiplicity all reside in a single Jordan block in $M$, which means they are severely ill-conditioned.  Even a polynomial with a seemingly evenly distributed set of zeros is likely to have extremely ill-conditioned zeros, as with the Wilkinson polynomial $(x-1)(x-2)\cdots(x-20)$ described in \cite{wilk:63}.  Zeros of widely differing magnitudes and polynomial coefficients of widely differing magnitudes both lead to potential difficulties.  For these reasons, numerical computation of roots from a polynomial is often avoided when possible.  If the polynomial is the result of another computation, it is generally best to reformulate the problem to avoid computing a polynomial as an intermediate step.  A standard example is the solution of an eigenvalue problem using the $QR$ iteration applied to a matrix $A$ instead of by computing a characteristic polynomial from $A$.  Nevertheless, polynomials do arise directly in some applications and must sometimes be dealt with.  Improving the reliability and efficiency of methods for computing polynomial zeros is an important problem.

	Several numerical methods exploit the connection between the eigenvalues of \(M\) and the roots of \(p(x)\), using it either as a core principle or as a supporting tool for locating or refining the roots.  Forming a companion matrix and computing the eigenvalues using the $QR$ algorithm is perhaps the most common approach.  Backward stability in the usual matrix sense with errors on the companion matrix implies backward stability with errors placed on polynomial coefficients \cite{vdde:83, edmu:95}.  Hence the $QR$ algorithm is backward stable in a polynomial sense, although it does not avoid problems due to ill-conditioning.  The main disadvantage of the $QR$ algorithm is that it is costly, involving $O(n^3)$ operations.  More recent work on structured forms of the $QR$ iteration have reduced this to $O(n^2)$.  These algorithms are promising, but they have fairly large constant factors hidden in their $O(n^2)$ complexity.  While they appear to be backward stable in practice, we are not aware of a proof.

	Methods that work directly with the polynomials can be more efficient.  The \textit{Ehrlich-Aberth method} is an iterative root-finding algorithm that refines approximations for all roots of \(p(x)\) simultaneously \cite{AE}. While this method primarily operates directly on the polynomial, bounds on the eigenvalues of the companion matrix can inform initial guesses, significantly enhancing convergence, especially when the roots are well-separated. This hybrid use of eigenvalue bounds obtained from the companion matrix as starting points demonstrates the interplay between numerical linear algebra and classical polynomial root-finding techniques.
	
	We would also like to mention some other references related to our work. First, the Chapter 6 in the valuable book \cite{H} by P. Henrici deals with the classical problem of providing inclusion results for the roots of a polynomial. Next, by V. Pan et al, there are many important papers on the general	concept of "root finding/root radii", including \cite{Pan1} and \cite{Pan2}. Lastly, we mention the informative paper \cite{BF} by D. Bini, G. Fiorentino. In fact, we	shall use several of the examples from \cite{BF} to illustrate the techniques presented in this paper.

	A well-known eigenvalue localization technique is based on Gershgorin's theorem which provides regions in the complex plane (disks centered around diagonal entries of \(M\)) that are guaranteed to contain the eigenvalues of \(M\) \cite[Chapter 6]{HJ}. By associating the eigenvalues with the roots of \(p(x)\), these disks serve as initial bounds for the roots. This information can guide iterative techniques, such as Newton’s method or deflation, to narrow down and compute the roots with high precision. While not a standalone root-finding algorithm, Gershgorin-based refinement is widely used as a supporting strategy in hybrid approaches.

	%The \textit{MPSolve} algorithm is another notable example. Designed for high efficiency, MPSolve combines subdivision techniques with eigenvalue-based %validation and refinement strategies to compute all roots of \(p(x)\). In particular, MPSolve may use eigenvalue localization from the companion matrix to %confirm or adjust the computed regions of the roots, ensuring numerical stability and accuracy. Its robustness makes it a popular choice for solving %polynomials with high degrees or coefficients prone to rounding errors.

	Recent research has continued to explore the interplay between the eigenvalues of \(M\) and the roots of \(p(x)\), addressing both theoretical questions and numerical improvements.  Two fairly recent articles which make use of various companion matrices are the interesting and informative papers \cite{TDP} by De Ter\'an, Dopico, and P\'erez and \cite{V} by Vander Meulen and Vanderwoerd.  The reader can also see the references within those two papers, but we specifically mention the numerical work [1] by Bini and Fiorentino, and the paper \cite{AM} using Gershgorin discs by Melman.

	The purpose of this present paper is to show a different way to bound and locate the roots of polynomials.  For a given polynomial $p(x)$, we take powers of a companion matrix of $p(x)$ and also powers of the companion matrix of the reversal polynomial of $p(x)$. With the use of Gelfand’s formula, we obtain an annulus which contains the roots of $p(x)$. Matrix balancing is employed to realize better convergence results with relatively small matrix powers.  The basic principle is illustrated on some small examples and then the approach is tested on larger matrices with ill conditioned eigenvalues.  Some interesting results on the "spreads" of $p(x)$ are developed.  In the first five sections, we give all the foundational information of our methods in terms of the more classical companion matrix. In Section~6, we give some examples where other companion matrices such as Fiedler companion matrices are used. This opens the door for further research on this topic.

	\section{localization of the roots of a real polynomial}
	Let $p(x)$ be the monic real polynomial given by
	\begin{equation*}
		p(x) = x^n+a_{n-1}x^{n-1}+ \dots +a_0, \text{~~with~~} a_0 \neq 0 \text{~~and~~} n \geq 2.
	\end{equation*}
	Let $\l_1, \l_2, \dots, \l_n$ be the roots of $p(x)$ not necessarily distinct and listed in non-decreasing order of their absolute values, i.e.,
	$$|\l_1| \leq |\l_2| \leq \dots \leq |\l_n|.$$
	%
	%$~a'_i = \displaystyle{\frac{a_{n-i}}{a_0}}~$ for $~i=0, 1, \dots, n$.
	%Since $|\lambda_2|\leq |\lambda_3| \leq \dots \leq |\lambda_n|,$ the roots
	%%%$\displaystyle{\frac{1}{|\lambda_2|}\geq \frac{1}{|\lambda_3|} \geq \dots \geq \frac{1}{|\lambda_n|}}.$
	%Therefore, Theorem \ref{t3} is applied to $q(x)$ to produce upper bounds on $\displaystyle{\frac{1}{|\lambda_2|}}$ and consequently lower bounds on $|\lambda_2|$.
	%The polynomial $q(x)$ is known as the monic reversal polynomial, as also utilized in \cite{TDP}, \cite{V} and \cite[page 366]{HJ}.
	%Another mathematical tool that we use in our analysis is the companion matrix.
	A companion matrix associated with $p(x)$ is
	\begin{equation}\label{f17}
		C_1 \,=\, \left[\begin{array}{c c c c c}
			0      &  0     & ~\dots  & ~~~0      & -a_0      \\
			1      &  0     & ~\dots  & ~~~0      & -a_1      \\
			\ddots & \ddots & ~\ddots & ~~~\vdots & \vdots    \\
			0      & 0      & ~\ddots & ~~~0      & -a_{n-2}  \\
			0      & 0      & ~\ddots & ~~~1      & -a_{n-1}
		\end{array}\right].
	\end{equation}
This is the transpose matrix of what is known as Frobenius companion matrix. In fact, there are different forms of the companion matrix and the approach of this paper is valid if any one of these forms is alternatively used instead of $C_1$, with an expected difference in the bounds found for the roots of $p(x)$. To learn more about the different forms of companion matrices and also how they are used in bounding the roots of polynomials, we refer the reader to \cite{MF}, \cite{CG}, \cite{TDP}, and \cite{V}.  The roots of $p(x)$ are the eigenvalues of $C_1$ and the multiplicity of each root of $p(x)$ is exactly equal to its algebraic multiplicity as an eigenvalue of $C_1$.

	The equation $x^n + a_{n-1}x^{n-1} + \dots + a_1 x + a_0 = 0$ is equivalent to
	\[
		a_0 x^n\left(\frac{1}{a_0} + \frac{a_{n-1}}{a_0}\left(\frac{1}{x}\right) + \dots + \frac{a_1}{a_0} \left(\frac{1}{x}\right)^{n-1} +
		\left(\frac{1}{x}\right)^n \right)=0.
	\]
	It follows that $x_0$ is a root of $p(x)$ if and only if $1/x_0$ is a root of the monic polynomial
	\begin{equation}
		q(x) = x^n + \displaystyle{\frac{a_1}{a_0} x^{n-1} + \dots + \frac{a_{n-1}}{a_0} x + \frac{1}{a_0}}.
		%, with $a'_0\neq0$, $n\geq2$ and
	\end{equation}
	This polynomial is called the monic reversal polynomial of $p(x)$, see \cite[p.366]{HJ}. Its eigenvalues are $1/\l_1, 1/\l_2, \dots, 1/\l_n$, with
	$$ \frac{1}{|\l_n|} \leq  \frac{1}{|\l_{n-1}|} \leq \dots \leq \frac{1}{|\l_1|}.$$
	It follows that a companion matrix associated with $q(x)$ is
	\begin{equation}\label{ff17}
		C_2 \,=\, \left[\begin{array}{c c c c c}
			0      &  0     & ~\dots  & ~~~0      & -1/a_0     \\
			1      &  0     & ~\dots  & ~~~0      & -a_{n-1}/a_1     \\
			\ddots & \ddots & ~\ddots & ~~~\vdots & \vdots    \\
			0      & 0      & ~\ddots & ~~~0      & -a_2/a_0 \\
			0      & 0      & ~\ddots & ~~~1      & -a_1/a_0
		\end{array}\right].
	\end{equation}

	For any square complex matrix $A$, let $\rho(A)$ and $N(A)$ be, respectively, the spectral radius and a submultiplicative matrix norm of $A$.  We assume without further comment that matrix norms submultiplicative.  It is well-known that
	\begin{equation}\label{ff20}
		\rho(A) \leq N(A).
	\end{equation}
	Applying this to $C_1$ and $C_2$, it follows that
	\begin{equation}\label{ff18}
		 N_2^{-1}(C_2) \leq |\l_1| \leq |\l_2| \leq \dots \leq |\l_n| \leq N_1(C_1),
	\end{equation}
	with $N_1$ and $N_2$ being matrix norms.  A geometric consequence of the above inequalities is that, in the complex plane, the roots of the polynomial $p(x)$ lie within the annulus $\varOmega_1$ given by
	\begin{equation} \label{k3}
		\varOmega_1 = \left\{ x + i y ~~ | ~~  N_2^{-1}(C_2) ~\leq ~\sqrt{x^2 + y^2} ~\leq ~ N_1(C_1) \right\}.
	\end{equation}

	Since we seek bounds that are easily computed from elements of the companion matrix, we focus on norms that have explicit expressions in terms of the matrix entries.  This includes induced one and infinity matrix norms as well as the Frobenius norm, which we denote by $\|A\|_F$.  If $A = [a_{ij}]$ is an $\n$ complex matrix then
	\begin{enumerate}
		\item $\|A\|_{\infty} = \displaystyle{\max_i \left\{\sum_{j=1}^n |a_{ij}|\right\}}$,
		\item $\|A\|_1 = \displaystyle{\max_j \left\{\sum_{i=1}^n |a_{ij}|\right\}}$,
		\item $\|A\|_F = \displaystyle{\sqrt{\sum_{i=1}^n \sum_{j=1}^n|a_{ij}|^2}}$.
	\end{enumerate}
		\begin{example}\label{ex1}
		%Consider the polynomial $$p(x) = x^4 + 6x^3 + 11x^2 + 7x + 3$$.
		%The roots of $p(x)$ are
		%$$ \l_4 =-3,~ \l_3 \approx -2.32,~ \l_3 \approx -0.38-0.56i \text{~~and~~} \l_4 = -0.38+0.56i.$$
		%The monic reversal polynomial of $p(x)$ is
		%$$q(x) = x^4 + \frac{7}{3} x^3 + \frac{11}{3} x^2 + \frac{6}{3} x + \frac{1}{3}.$$
		Consider the polynomial
		$$p(x) = x^4 - x^3 -2x^2 + 6x -4.$$
		The roots of $p(x)$ are
		$$ \l_4 =-2,~ \l_3 = 1+i,~ \l_2 = 1-i \text{~~and~~} \l_1 = 1.$$
		The monic reversal polynomial of $p(x)$ is
		$$q(x) = x^4 - \frac{3}{2} x^3 + \frac{1}{2} x^2 + \frac{1}{4} x -\frac{1}{4}.$$
		The companion matrices associated, respectively, with $p(x)$ and $q(x)$ are
		$$
		C_1\,=\, \left[
		\begin{array}{c c c r }
			0 & 0 &   0 & ~4 \\
			1 & 0 &   0 & -6 \\
			0 & 1 &   0 & ~2 \\
			0 & 0 &   1 & ~1
		\end{array}
		\right]
		\text{~~ and ~~}
		C_2\,=\, \left[
		\begin{array}{c c  c r }
			0 & 0 &   0 & ~1/4 \\
			1 & 0 &   0 & -1/4 \\
			0 & 1 &   0 & -1/2 \\
			0 & 0 &   1 & ~3/2
		\end{array}
		\right].
		$$
		By simple calculations, we obtain
		\begin{center}
			\begin{tabular}{|c|l l|}\hline
				%&$k$										& $1$ 		\\ \hline
				%
				\multirow{3}{3cm}{Upper bounds on $|\l_n| = |\l_4| = 2 $}
				&$\| C_1\|_{\infty} ~\approx$  	& $7$ 	  	\\ \cline{2-3}
				%%%%%%%%%%%%%%%%%%%%%%%%%%%%%%%%%%%%%%%%%%%%%%%%%%%%%%%%%%%%%%%%%%%%%%%%%%%%%%%%%%%%%%%%%%%%%%%%%%%%%%%%%%%%%%%%%%%%%%%%%
				&$\| C_1\|_1 ~~\approx$       	& $13$    	\\ \cline{2-3}
				%%%%%%%%%%%%%%%%%%%%%%%%%%%%%%%%%%%%%%%%%%%%%%%%%%%%%%%%%%%%%%%%%%%%%%%%%%%%%%%%%%%%%%%%%%%%%%%%%%%%%%%%%%%%%%%%%%%%%%%%%
				&$\| C_1\|_F ~\approx$       	 & $7.75$   \\ \hline
				%%%%%%%%%%%%%%%%%%%%%%%%%%%%%%%%%%%%%%%%%%%%%%%%%%%%%%%%%%%%%%%%%%%%%%%%%%%%%%%%%%%%%%%%%%%%%%%%%%%%%%%%%%%%%%%%%%%%%%%%%
				\multirow{3}{3cm}{Lower bounds on $|\l_1| = 1$}
				& $\|C_2\|^{-1}_{\infty} ~\approx$   & $0.40$    \\ \cline{2-3}
				%%%%%%%%%%%%%%%%%%%%%%%%%%%%%%%%%%%%%%%%%%%%%%%%%%%%%%%%%%%%%%%%%%%%%%%%%%%%%%%%%%%%%%%%%%%%%%%%%%%%%%%%%%%%%%%%%%%%%%%%
				&$\|C_2\|^{-1}_1 ~~\approx $ 	    & $0.40$  	\\ \cline{2-3}
				%%%%%%%%%%%%%%%%%%%%%%%%%%%%%%%%%%%%%%%%%%%%%%%%%%%%%%%%%%%%%%%%%%%%%%%%%%%%%%%%%%%%%%%%%%%%%%%%%%%%%%%%%%%%%%%%%%%%%%%%
				&$\|C_2\|^{-1}_F ~\approx $ 	    & $0.42$  	\\ \hline
				%%%%%%%%%%%%%%%%%%%%%%%%%%%%%%%%%%%%%%%%%%%%%%%%%%%%%%%%%%%%%%%%%%%%%%%%%%%%%%%%%%%%%%%%%%%%%%%%%%%%%%%%%%%%%%%%%%%%%%%%
			\end{tabular}	\\
			\vskip 2mm
			Table~1
		\end{center}

	From this table we choose the best upper and lower bounds to construct an annulus within which lies all the roots of $p(x)$.
	See Figure 1.
	\begin{figure}
		\centering
		%\hskip20mm
		\includegraphics[scale=0.50]{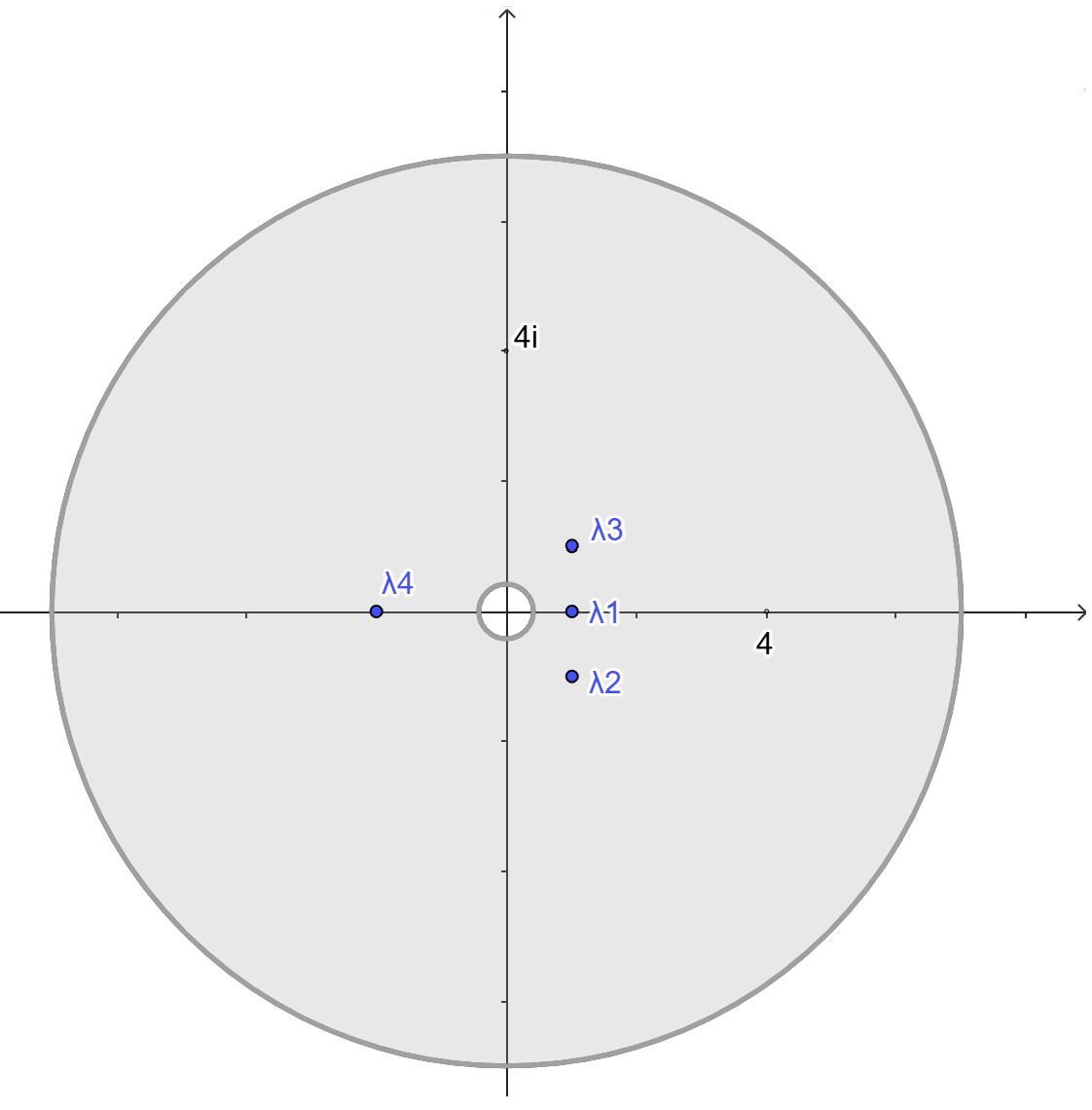}
		\caption{}
		All the roots of $p(x)$ lie within the annulus  bounded by the circles centered at the origin, with radii $0.42$ and $7$ selected from Table 1.
	\end{figure}
		\exampleqed
	\end{example}
	%\textbrokenbar
	%
	In the above example, we have seen how the norms of the companion matrix are used to generate some upper and lower bounds for, respectively, the modulii of $\l_n$ and $\l_1$. We now show how the use of powers of the companion matrices facilitates finding sharper bounds.

	If $\l$ is an eigenvalue of $A$, then $\l^k$ is an eigenvalue of $A^k$ for every square complex matrix $A$ and every positive integer $k$.  Then it follows by (\ref{ff20}) that
	$$ |\l_1|^k \leq |\l_2|^k \leq \dots \leq |\l_n|^k  \leq N(C_1^k)$$
	and
	$$  \frac{1}{|\l_n|^k} \leq  \frac{1}{|\l_{n-1}|^k} \leq \dots \leq \frac{1}{|\l_1|^k} \leq N(C_2^k).$$
	Therefore, for all pairs $(k_1, k_2)$ in $\mathbb{N}^2$ and matrix norms $N_1$ and $N_2$,
	we have
	\begin{equation}\label{ff19}
		\frac{1}{\sqrt[k_2]{N_2\left(C_2^{k_2}\right)}} \leq |\l_1| \leq \dots \leq |\l_n| \leq \sqrt[k_1]{N_1\left(C_1^{k_1}\right)}.
	\end{equation}
	%
	%%%%
	%%%%%%%%%%%%%%%%%%%%%%%%%%%%%%%%%%%%%%%%%%%%%%%%%%%%%%%
	%
	%
	%
	%
	%
	%

	%
	It follows from (\ref{ff19}) that the roots of the polynomial $p(x)$ lie within the annulus $\varOmega_{k_1,k_2}$ given by the following theorem.
	\begin{theorem}\label{t1}
		Using the same notation as above, the roots of the polynomial $p(x)$ are contained in the annulus defined by
		\begin{equation}\label{ff24}
			\varOmega_{k_1,k_2} = \left\{~~ x+iy ~~~~ \Bigg| ~~~~ \frac{1}{\sqrt[k_2]{N_2\left(C_2^{k_2}\right)}} ~ \leq ~ \sqrt{x^2 + y^2} ~ \leq ~ \sqrt[k_1]{N_1\left(C_1^{k_1}\right)} ~~\right\}.~~~~~~
		\end{equation}
	\end{theorem}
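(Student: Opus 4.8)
The statement is essentially a geometric repackaging of the chain of inequalities in~(\ref{ff19}), so the plan is short. First I would recall the algebraic input: for any square complex matrix $A$ and any positive integer $k$, $\l^k$ is an eigenvalue of $A^k$ whenever $\l$ is an eigenvalue of $A$, so $\rho(A^k) = \rho(A)^k$. Applying the general bound $\rho(\cdot) \le N(\cdot)$ of~(\ref{ff20}) to $A = C_1^{k_1}$, where $C_1$ is the companion matrix~(\ref{f17}) whose eigenvalues are the $\l_j$, gives $|\l_n|^{k_1} = \rho(C_1^{k_1}) \le N_1(C_1^{k_1})$, that is, $|\l_n| \le \sqrt[k_1]{N_1(C_1^{k_1})}$.

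Second, I would handle the lower endpoint through the reversal companion matrix $C_2$ from~(\ref{ff17}). Its eigenvalues are the reciprocals $1/\l_1, \dots, 1/\l_n$, and since $|\l_1|$ is the smallest of the moduli $|\l_j|$, the number $1/|\l_1|$ is the \emph{largest} of the reciprocal moduli; hence $\rho(C_2^{k_2}) = (1/|\l_1|)^{k_2}$. Again by~(\ref{ff20}), $(1/|\l_1|)^{k_2} \le N_2(C_2^{k_2})$, which rearranges to $|\l_1| \ge 1/\sqrt[k_2]{N_2(C_2^{k_2})}$. Combined with the previous step and the ordering $|\l_1| \le \cdots \le |\l_n|$, this is precisely~(\ref{ff19}).

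Finally, I would translate~(\ref{ff19}) into the planar statement. Writing each root as $\l_j = x_j + iy_j$, we have $\sqrt{x_j^2 + y_j^2} = |\l_j|$, and~(\ref{ff19}) says exactly that this quantity lies in the closed interval $\bigl[\,1/\sqrt[k_2]{N_2(C_2^{k_2})},\ \sqrt[k_1]{N_1(C_1^{k_1})}\,\bigr]$ for every $j$. Hence every root $\l_j$ of $p(x)$ satisfies the membership condition defining $\varOmega_{k_1,k_2}$ in~(\ref{ff24}), which is the claim. There is no genuine obstacle here; the only point requiring a moment's care — already dealt with above — is keeping track of which extreme eigenvalue modulus governs each endpoint once one passes to matrix powers and to the reversal matrix, namely that $|\l_n|$ controls the outer radius via $C_1$ and $|\l_1|$ controls the inner radius via $C_2$.
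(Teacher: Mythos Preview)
Your proposal is correct and follows exactly the paper's own route: the paper derives the chain of inequalities~(\ref{ff19}) from $\rho(A^k)=\rho(A)^k$ together with $\rho(\cdot)\le N(\cdot)$ applied to $C_1^{k_1}$ and $C_2^{k_2}$, and then simply notes that~(\ref{ff24}) is the geometric restatement of~(\ref{ff19}). Your write-up is, if anything, more explicit than the paper about why $1/|\l_1|$ is the spectral radius of $C_2$, but the underlying argument is identical.
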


	Gelfand's formula \cite[Chapter 5]{HJ} states that for every $\n$ complex matrix $A$ with spectral radius $\rho(A)$ and for every matrix norm $N$,
	\begin{equation}\label{ff21}
		\rho(A) = \lim_{k\rightarrow \infty} \sqrt[k]{N(A^k)}~.
	\end{equation}

	The following is an immediate consequence of (\ref{ff21}).
	\begin{theorem}\label{t2}
		For any matrix norm $N$,
		\begin{equation}\label{ff22}
			\lim_{k \to \infty} N(C_1^{k})^{\frac {1}{k}} = |\l_n| \text{~~and~~}  \lim_{k \to \infty} N(C_2^{k})^{\frac {1}{k}} = \frac{1}{|\l_1|}.
		\end{equation}
	\end{theorem}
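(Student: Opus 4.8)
The plan is to deduce both limits directly from Gelfand's formula (\ref{ff21}), applied in turn to $A = C_1$ and $A = C_2$, so that the only real content is the identification of the two spectral radii. First I would invoke the correspondence established earlier in Section~2: the eigenvalues of $C_1$ are precisely the roots $\l_1, \l_2, \dots, \l_n$ of $p(x)$, and the multiplicity of each root equals its algebraic multiplicity as an eigenvalue. Since the roots are listed so that $|\l_1| \le |\l_2| \le \dots \le |\l_n|$, it follows that $\rho(C_1) = \max_{1 \le i \le n} |\l_i| = |\l_n|$. Substituting this into Gelfand's formula (\ref{ff21}) with the given norm $N$ yields the first equality, $\lim_{k\to\infty} N(C_1^{k})^{1/k} = |\l_n|$.

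For the second limit I would use that $C_2$ is a companion matrix of the monic reversal polynomial $q(x)$, whose roots are $1/\l_1, 1/\l_2, \dots, 1/\l_n$; here the standing hypothesis $a_0 \neq 0$ is what guarantees that $0$ is not a root of $p(x)$, so these reciprocals are all well defined and are exactly the eigenvalues of $C_2$. Hence $\rho(C_2) = \max_{1 \le i \le n} |1/\l_i| = 1 / \min_{1 \le i \le n} |\l_i| = 1/|\l_1|$, and a second application of (\ref{ff21}) gives $\lim_{k\to\infty} N(C_2^{k})^{1/k} = 1/|\l_1|$.

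I do not anticipate a genuine obstacle here: the statement is essentially a translation of Gelfand's formula into the companion-matrix language of Section~2, and the only points worth recording explicitly are (i) that the root–eigenvalue identification preserves multiplicities, so the spectral radius is exactly the largest root modulus, and (ii) that $a_0 \neq 0$ is precisely what makes $1/|\l_1|$ meaningful. I would close with the observation that the argument is valid for every submultiplicative matrix norm, with a limit that does not depend on the choice of norm, which is what justifies regarding the annuli $\varOmega_{k_1,k_2}$ of Theorem~\ref{t1} as tightening toward the exact values $|\l_n|$ and $1/|\l_1|$ as $k_1, k_2 \to \infty$.
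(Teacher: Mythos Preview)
Your proposal is correct and matches the paper's own justification: the paper simply states that Theorem~\ref{t2} is ``an immediate consequence of (\ref{ff21}),'' i.e., of Gelfand's formula, which is exactly what you do after identifying $\rho(C_1)=|\l_n|$ and $\rho(C_2)=1/|\l_1|$. Your write-up is more explicit than the paper's one-line remark, but the approach is the same.
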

	\begin{rem}\label{r1}
		Clearly, Theorems \ref{t1} and \ref{t2} remain valid if $C_1$ is any nonsingular $n\times n$ complex matrix with $n\geq2$, and $C_2$ is its inverse matrix or any matrix for which the smallest and largest eigenvalues in absolute value are, respectively, $1/\l_n$ and $1/\l_1$.
	\end{rem}

	It may happen in some cases that the size of the inclusion region (\ref{k3}) obtained by using $C_1$ and $C_2$ is unreasonably large compared to the distribution of the roots of $p(x)$. A good alternative is to produce inclusion regions of the form (\ref{ff24}) by considering the integer powers of $C_1$ and $C_2$.  This is ensured by (\ref{ff22}) which means that we can obtain bounds that are arbitrary close to $|\l_n|$ and $|\l_1|$ for sufficiently large values of $k_1$ and $k_2$.  An algorithm based on Theorem \ref{t1} to compute these types of localization regions consists of the following steps.
	\begin{algorithm}\label{a1}~~~
	\begin{enumerate}
	\item Input the coefficients of the real or complex monic polynomial $p(x)$ with a nonzero constant coefficient.
		\item Compute the companion matrix $C_1$ of $p(x)$.
		\item Compute $C_1^{k_{1}}$, the matrix $C_1$ raised to the integer power $k_1$.
		\item Compute $\|C_1^{k_1}\|$, a matrix norm of $C_1^{k_1}$. (Use the one, infinity or Frobenius norm).
		\item $b_1 = \sqrt[k_1]{\|C_1^{k_1}\|}$ is an upper bound of $|\l_n|$ the largest root in absolute value of $p(x)$.
		\item Compute the coefficients of $q(x)$, the monic reversal polynomial of $p(x)$.
		\item Compute the companion matrix $C_2$ of $q(x)$.
		\item Compute $C_2^{k_{2}}$, the matrix $C_2$ raised to the integer power $k_2$.
		\item Compute $\|C_2^{k_2}\|$, a matrix norm of $C_2^{k_2}$. (Use the one, infinity or Frobenius norm).
		\item $b_2 = \sqrt[-k_2]{\|C_2^{k_2}\|}$ is a lower bound of $|\l_1|$, the smallest root in absolute value of $p(x)$.
		\item The circles centered at the origin with radii $b_1$ and $b_2$ are the boundaries of an annulus that contains all the roots of $p(x)$.
	\end{enumerate}
	\end{algorithm}
	\begin{example}\label{ex2}
		Consider the polynomial
		$$p(x) = x^8 +8x^7+14x^6-28x^5-81x^4-8x^3-14x^2+28x+80.$$
		The roots of this polynomial are
		$$\l_8 =4, l_7 = -3+i, l_6=-3-i, l_5 = 2, l_4 = -1, l_3=i, l_2=-i \text{~~and~~} l_1=1.$$
		The monic reversal polynomial of $p(x)$ is
		$$q(x) = x^8 +\frac{7}{20} x^7 -\frac{7}{40} x^6-\frac{1}{10} x^5-\frac{81}{80} x^4-\frac{7}{20} x^3+\frac{7}{40} x^2+\frac{1}{10} x+\frac{1}{80} .$$

		The companion matrices associated with, respectively, with $p(x)$ and $q(x)$ are given by
		%\
		%{\small
		{\fontsize{8}{11}\selectfont
		$$
		%\text{\small}
		C_1\,=\, \left[
		\begin{array}{c c c c c c c r }
			0 & 0 &   0 &  0  & 0 &   0 &  0  & -80\\
			1 & 0 &   0 &  0  & 0 &   0 &  0  & -28\\
			0 & 1 &   0 &  0  & 0 &   0 &  0  &  14\\
			0 & 0 &   1 &  0  & 0 &   0 &  0  &   8\\
			0 & 0 &   0 &  1 &  0 &   0 &  0 &   81\\
			0 & 0 &   0 &  0 &  1 &   0 &  0 &   28\\
			0 & 0 &   0 &  0 &  0 &   1 &  0 &  -14\\
			0 & 0 &   0 &  0 &  0 &   0 &  1 &   -8
		\end{array}
		\right]
		\text{~~ and ~~}
		C_2\,=\, \left[
		\begin{array}{c c c c c c c r }
			0 & 0 &   0 &  0  & 0 &   0 &  0  &   -1/80  \\
			1 & 0 &   0 &  0  & 0 &   0 &  0  &   -1/10  \\
			0 & 1 &   0 &  0  & 0 &   0 &  0  &   -7/40  \\
			0 & 0 &   1 &  0  & 0 &   0 &  0  &    7/20  \\
			0 & 0 &   0 &  1 &  0 &   0 &  0 &     81/80 \\
			0 & 0 &   0 &  0 &  1 &   0 &  0 &     1/10  \\
			0 & 0 &   0 &  0 &  0 &   1 &  0 &     7/40  \\
			0 & 0 &   0 &  0 &  0 &   0 &  1 &    -7/20
		\end{array}
		\right]
		$$
		}
		Calculations of some powers of $C_1$ and $C_2$ along with their one, infinity and Frobenius norms lead to the following table.
		{\fontsize{8}{11}\selectfont
			\begin{center}
				\begin{tabular}{|c|c|c|c|c|c|c|c|}\hline
					&$k$										 & $1$ &  $2$     &  $8$    & $16$    & $32$     & $64$      \\ \hline
					\multirow{3}{3cm}{Upper bounds on $|\l_n| = |\l_8| = 4 $}
					&$\sqrt[k]{\| C_1^k\|_{\infty}}\approx$  & $82$ 	&  $26.87$     &  $7.51$ & $5.45$ & $4.67$   & $4.32$  \\ \cline{2-8}
					%%%%%%%%%%%%%%%%%%%%%%%%%%%%%%%%%%%%%%%%%%%%%%%%%%%%%%%%%%%%%%%%%%%%%%%%%%%%%%%%%%%%%%%%%%%%%%%%%%%%%%%%%%%%%%%%%%%%%%%%% 
					&$\sqrt[k]{\| C_1^k\|_1}\approx$       	 & $261$	&  $44.12$     &  $8.22$ & $5.70$ & $4.78$   & $4.37$  \\ \cline{2-8}
					%%%%%%%%%%%%%%%%%%%%%%%%%%%%%%%%%%%%%%%%%%%%%%%%%%%%%%%%%%%%%%%%%%%%%%%%%%%%%%%%%%%%%%%%%%%%%%%%%%%%%%%%%%%%%%%%%%%%%%%%% 
					&$\sqrt[k]{\| C_1^k\|_F}\approx$       	 & $122.70$ &  $30.97$     &  $7.63$ & $5.49$ & $4.69$   & $4.33$  \\ \hline
					%%%%%%%%%%%%%%%%%%%%%%%%%%%%%%%%%%%%%%%%%%%%%%%%%%%%%%%%%%%%%%%%%%%%%%%%%%%%%%%%%%%%%%%%%%%%%%%%%%%%%%%%%%%%%%%%%%%%%%%%% 
					\multirow{3}{3cm}{Lower bounds on $|\l_1| = 1$} 
					& $\sqrt[-k]{\|C_2^k\|_{\infty}} \approx$   & $0.50$ &  $0.69$     &  $0.85$ & $0.92$ & $0.96$   & $0.98$  \\ \cline{2-8}
					%%%%%%%%%%%%%%%%%%%%%%%%%%%%%%%%%%%%%%%%%%%%%%%%%%%%%%%%%%%%%%%%%%%%%%%%%%%%%%%%%%%%%%%%%%%%%%%%%%%%%%%%%%%%%%%%%%%%%%%% 
					&$\sqrt[-k]{\| C_2^k\|_1} \approx $ 	    & $0.44$ &  $0.66$     &  $0.90$ & $0.95$ & $0.97$   & $0.99$  \\ \cline{2-8}
					%%%%%%%%%%%%%%%%%%%%%%%%%%%%%%%%%%%%%%%%%%%%%%%%%%%%%%%%%%%%%%%%%%%%%%%%%%%%%%%%%%%%%%%%%%%%%%%%%%%%%%%%%%%%%%%%%%%%%%%% 
					&$\sqrt[-k]{\| C_2^k\|_F} \approx $ 	    & $0.35$ &  $0.59$     &  $0.87$ & $0.93$ & $0.97$   & $0.98$  \\ \hline
					%%%%%%%%%%%%%%%%%%%%%%%%%%%%%%%%%%%%%%%%%%%%%%%%%%%%%%%%%%%%%%%%%%%%%%%%%%%%%%%%%%%%%%%%%%%%%%%%%%%%%%%%%%%%%%%%%%%%%%%% 
				\end{tabular}	\\
				\vskip 2mm
				Table~2
			\end{center}
		}
		\begin{figure}
			\centering
			% \hskip20mm
			\includegraphics[scale=0.50]{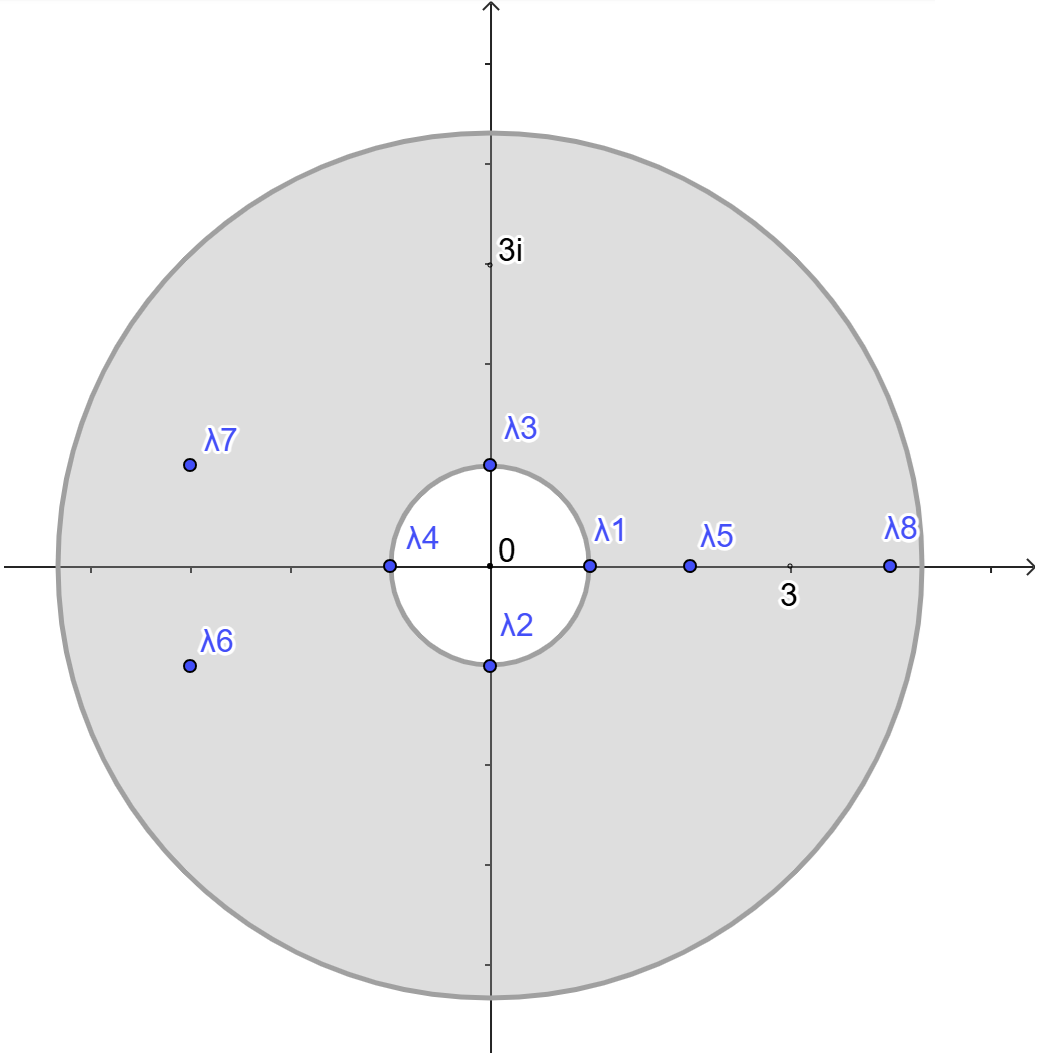}
			\caption{}
			All the roots of $p(x)$ lie within the annulus  bounded by the two circles centered at the origin, with radii $0.99$ and $4.32$ selected from Table 2.
		\end{figure}
	\exampleqed
	\end{example}

	\section{Refining the roots location through matrix balancing of $C_1$ and $C_2$}

	Balancing a square real matrix via diagonal similarity transformations is a numerical technique used to decrease errors in eigenvalue computations. This process aims to reduce the norm of the matrix while preserving its eigenvalues, which can lead to more accurate numerical results.  The process of balancing an $n \times n$ irreducible complex matrix $A_1=[a_{ij}]$ can be done as follows (see \cite{JG}).  Compute the matrix $A_2 = D_1 A_1 D_1^{-1}$, where $D_1 = [d_{ij}]$ is an $n\times n$ diagonal matrix with diagonal entries defined as follows:
	$$
		d_{11} = \sqrt{\frac{ \sum_{s=2}^{n}|a_{s1}|}{\sum_{s=2}^{n}|a_{1s}|}},
	$$
	$$
	d_{ii} = \sqrt{\frac{\sum_{s=1}^{i-1}|a_{si}|d_{ss} + \sum_{s=i+1}^{n}|a_{si}|}{\sum_{s=1}^{i-1}|a_{is}|/d_{ss} + \sum_{s=i+1}^{n}|a_{is}|}},
	$$
	for $i=2, 3, \dots, n-1$, and
	$$
	d_{nn} = \sqrt{\frac{\sum_{s=1}^{n-1}|a_{sn}|d_{ss} }{\sum_{s=1}^{n-1}|a_{ns}|/d_{ss}}}.
	$$
	Apply the same process to $A_2$ to get matrix $A_3 = D_2 A_2 D_2^{-1}$ and continue iterating this way for the required number of iterations.  The obtained matrix will typically have a smaller norm than the original matrix $A_1$.  In general, this technique is known to work well with companion matrices which are sparse and irreducible and it can improve the norms noticeably after few iterations only.  Further, if sparsity is exploited, each iteration of the balancing algorithm requires only $O(n)$ operations.  Small powers of a companion matrix also have sparsity structure, so a balancing step applied to $C_1^k$ is also an $O(n)$ computation if $k$ is bounded independently of $n$.

	By incorporating the technique of matrix balancing into Algorithm \ref{a1}, we obtain the following updated algorithm which summarizes the main steps needed to derive bounds and locations of the polynomial roots.
	\begin{algorithm}\label{a2}~~~
		\begin{enumerate}
			\item Input the coefficients of the real or complex monic polynomial $p(x)$ with a nonzero constant coefficient.
			\item Compute the companion matrix $C_1$ of $p(x)$.
			\item Compute $C_1^{k_1}$, the matrix $C_1$ raised to the integer power $k_1$.
			\item Balance $C_1^{k_1}$; by starting with a few iterations, this could be enough.
			\item Compute $\|C_1^{k_1}\|$, a matrix norm of $C_1^{k_1}$. (Use one, infinity or Frobenius norm).
			\item $b_1 = \sqrt[k_1]{\|C_1^{k_1}\|}$ is an upper bound of $|\l_n|$ the largest root in absolute value of $p(x)$.
			\item Compute the coefficient of $q(x)$, the monic reversal polynomial of $p(x)$.
			\item Compute the companion matrix $C_2$ of $q(x)$.
			\item Compute $C_2^{k_2}$, the matrix $C_2$ raised to the integer power $k_2$.
			\item Balance $C_2^{k_2}$; using a sufficient number of iterations.
			\item Compute $\|C_2^{k_2}\|$, a matrix norm of $C_2^{k_2}$. (Use one, infinity or Frobenius norm).
			\item $b_2 = \sqrt[-k_2]{\|C_2^{k_2}\|}$ is a lower bound of $|\l_1|$, the smallest root in absolute value of $p(x)$.
			\item The circles centered at the origin with radii $b_1$ and $b_2$ are the boundaries of an annulus that contains all the roots of $p(x)$.
		\end{enumerate}
	\end{algorithm}
			~
	\begin{example}\label{ex3}
		We reconsider the polynomial $p(x)$ in Example \ref{ex3} and we apply the technique of matrix balancing with 3 iterations to the same powers of $C_1$ and $C_2$ provided in Table~2.  The obtained bounds are given in the next table where $F_k$ is the matrix obtained by balancing $C_1^k$ and $G_k$ is the matrix obtained by balancing $C_2^k$.  {\fontsize{8}{11}\selectfont
			\begin{center}
				\begin{tabular}{|c|c|c|c|c|c|c|c|}\hline
					&$k$										 & $1$ &  $2$     &  $8$    & $16$    & $32$     & $64$      \\ \hline
					\multirow{3}{3cm}{Upper bounds on $|\l_n| = |\l_8| = 4 $}
					&$\sqrt[k]{\| F_k\|_{\infty}}\approx$  & $14.74$ 	&  $10.15$     &  $5.74$ & $4.77$ & $4.37$   & $4.18$  \\ \cline{2-8}
					%%%%%%%%%%%%%%%%%%%%%%%%%%%%%%%%%%%%%%%%%%%%%%%%%%%%%%%%%%%%%%%%%%%%%%%%%%%%%%%%%%%%%%%%%%%%%%%%%%%%%%%%%%%%%%%%%%%%%%%%%
					&$\sqrt[k]{\| F_k\|_1}\approx$       	 & $14.74$	&  $10.15$     &  $5.74$ & $4.77$ & $4.37$   & $4.18$  \\ \cline{2-8}
					%%%%%%%%%%%%%%%%%%%%%%%%%%%%%%%%%%%%%%%%%%%%%%%%%%%%%%%%%%%%%%%%%%%%%%%%%%%%%%%%%%%%%%%%%%%%%%%%%%%%%%%%%%%%%%%%%%%%%%%%% 
					&$\sqrt[k]{\| F_k\|_F}\approx$       	 & $12.11$  &  $8.73$      &  $5.57$ & $4.70$ & $4.33$   & $4.16$  \\ \hline
					%%%%%%%%%%%%%%%%%%%%%%%%%%%%%%%%%%%%%%%%%%%%%%%%%%%%%%%%%%%%%%%%%%%%%%%%%%%%%%%%%%%%%%%%%%%%%%%%%%%%%%%%%%%%%%%%%%%%%%%%% 
					\multirow{3}{3cm}{Lower bounds on $|\l_1| = 1$}
					& $\sqrt[-k]{\|G_k\|_{\infty}} \approx$   & $0.56$ &  $0.76$     &  $0.95$ & $0.97$ & $0.987$   & $0.993$  \\ \cline{2-8}
					%%%%%%%%%%%%%%%%%%%%%%%%%%%%%%%%%%%%%%%%%%%%%%%%%%%%%%%%%%%%%%%%%%%%%%%%%%%%%%%%%%%%%%%%%%%%%%%%%%%%%%%%%%%%%%%%%%%%%%%% 
					&$\sqrt[-k]{\| G_k\|_1} \approx $ 	    & $0.56$ &  $0.76$     &  $0.95$ & $0.97$ & $0.987$   & $0.993$  \\ \cline{2-8}
					%%%%%%%%%%%%%%%%%%%%%%%%%%%%%%%%%%%%%%%%%%%%%%%%%%%%%%%%%%%%%%%%%%%%%%%%%%%%%%%%%%%%%%%%%%%%%%%%%%%%%%%%%%%%%%%%%%%%%%%% 
					&$\sqrt[-k]{\| G_k\|_F} \approx $ 	    & $0.38$ &  $0.63$     &  $0.91$ & $0.95$ & $0.976$   & $0.988$  \\ \hline
					%%%%%%%%%%%%%%%%%%%%%%%%%%%%%%%%%%%%%%%%%%%%%%%%%%%%%%%%%%%%%%%%%%%%%%%%%%%%%%%%%%%%%%%%%%%%%%%%%%%%%%%%%%%%%%%%%%%%%%%% 
				\end{tabular} \\
				\vskip 2mm
				Table~3
			\end{center}
		}

		For this particular polynomial, we have a few observations.
		\begin{enumerate}
		\item By comparing Table~3 with Table~2, we can see that the bounds obtained with the use of matrix balancing are better than those obtained without it for each of the chosen norms and powers.
		\item The bounds obtained by using the one and infinity norms are close to each other but slightly different from those obtained by using Frobenius norm
		\item The bounds on $|\l_n|$ obtained with Frobenius norm are slightly better than those obtained with one and infinity norms.
		\item The bounds on $|\l_1|$ obtained with one and infinity norms are slightly better than those obtained with Frobenius norm.
		\end{enumerate}
		\exampleqed
	\end{example}
	%the matrix norms usually used in numerical linear algebra are the one and infinity matrix norms. This is due to their simple linear explicit forms in %terms of the modulii of the matrix entries.
	%

\section{Numerical Results for Larger Matrices}
\label{sec:numer-results-larg}

	Applying Gelfand's theorem to larger matrices by computing higher powers of $C_1$ or $C_2$ is likely to be inefficient.  In general, the matrix $C_1^k$ has the form
	\begin{equation*}
		C_1^k =
		\begin{bmatrix}
			0_{k\times (n-k)} & C_{12}\\
			I_{n-k} & C_{22}
		\end{bmatrix}
	\end{equation*}
	so that only the last $k$ columns of $C_1^k$ need to be computed to determine $N_1(C_1^k)$.  If $k$ is bounded and small relative to $n$, then computing $N_1(C_1^k)$ requires $O(n)$ operations.  Balancing $C_1^{k}$ then also requires only $O(n)$ operations.  If, instead, we have no bound on $k$ and the computation of $C_1^k$ the powers becomes $O(n^3)$ or worse, then it may require as many operations to compute the matrix powers as it would be to compute the eigenvalues of $C_1$ using an unstructured $QR$ iteration.  If we seek inclusion regions that are fast to compute and potentially useful for the iterative computation of polynomial zeros, we would like to be able to obtain useful inclusion regions from small powers of large matrices.  Similar comments apply to $C_2^k$.

	In this section we consider mostly small powers of some difficult families of companion matrices taken from \cite{BF}.  We specifically consider the following
	\begin{enumerate}
		\item The Laguerre polynomial $L_{100}(x)$ where
			\begin{equation*}
				L_0(x) = 1, \qquad L_1(x) = 1-x, \qquad
				L_{i+1}(x) = (2i+1-x) L_i(x) -i^2p_{i-1}(x).
			\end{equation*}
			Note that while these zeros are very ill-conditioned as functions of polynomial coefficients, they can be obtained stable way as the eigenvalues of a symmetric Jacobi matrix, which is how we have computed them in the tests.
		\item The characteristic polynomial of a $100\times 100$ 9-diagonal complex Toeplitz matrix with nonzero diagonals in positions $(-2, -1, 0, 1, 2, 3, 4, 5, 6)$ (that is the second subdiagonal through the sixth superdiagonal) and values on those diagonals given by $(-i, -3, 0, 10, 1, i, 28, -3, 1)$.
		\item The $50$ degree polynomial $(x-1)^{50}$.  The companion matrix then has a single extremely ill-conditioned eigenvalue $\lambda = 1$ of high multiplicity in a single Jordan block.
		\item A polynomial (called LSR1 in \cite{BF})
			\begin{equation*}
				(x^{50} + 1)(x^2+ax+a^{-1})
			\end{equation*}
			for $a=10^{20}$.  This polynomial has a very small zero near $-1/a^2 = -10^{-20}$ and a very large zero near $-a=-10^{20}$ and provides some insight into the process behaves when eigenvalues have a very large range of moduli.
	\end{enumerate}

	All of these have ill-conditioned zeros.  The polynomials and companion matrices were generated in extended precision of 256 bits using the \verb|BigFloat| type in Julia.  The computations of the spectral radii to which we compare the bounds were done in extended precision.  Prior to taking power and balancing, the matrices were converted to double precision, although we did run tests with higher precision to make sure that the results were not compromised significantly by the use of double precision.  One limitation of double precision is a greater possibility of overflow.  The routine for computing powers used scaling to avoid overflow.

	As noted, on larger problems, forming high powers of companion matrices will be inefficient since it results in matrices with dense blocks.  For this reason, we have focused mostly on lower powers, although we have included a single result in our tables for the power $k=100$.  Since balancing is very efficient on small powers of companion matrices, we have used however many balancing steps seemed to give the best results.  In all of the test problems, 20 balancing iterations seemed to give good bounds, with minimal benefit from additional iterations.  We used 3 iterations in one experiment with the Laguerre polynomial, just to show how imperfect balancing interacts with the improvement in the bounds from taking powers.  Higher powers have more benefit on matrices that are not as fully balanced.

	Table~\ref{tb:laguerre3} and Table~\ref{tb:laguerre20} show the bounds for $L_{100}(x)$ with 3 and 20 balancing iterations respectively.  Even 3 iterations reduces the upper bound and increases the lower bound dramatically.  In this case, there is significant improvement from taking powers, although even for $k=100$, the upper bound is not particularly close to $\rho(C_1)$.  In contrast, balancing more carefully with 20 iterations yields a big improvement for $k=1$, but balancing higher powers improve the bounds more slowly.  The largest improvement is seen in going to $k=2$.  It might seem disappointing that increasing $k$ beyond a fairly low power gives only incremental improvement, but this can be viewed positively: Balancing is fast, higher powers are slow, and the greatest benefit is seen with the fastest operations.  All later results given here are for $20$ balancing iterations, but we have looked at a smaller number of iterations in all the examples.  A similar pattern holds for the others.

	\setcounter{table}{3} % 3 manually captioned tables prior to this.
	\begin{table}
		\begin{center}
			\begin{tabular}{|r|r|r|r|r|r|}
				\hline
				\multicolumn{1}{|c|}{} & \multicolumn{2}{|c|}{$\rho(C_1)=374.984$}
					& \multicolumn{2}{|c|}{$1/\rho(C_2)=0.0143861$} \\\hline
				\spacer{10pt}
				$k$ & $\sqrt[k]{\|C_1^k\|_1}$ & $\sqrt[k]{\|F_k\|_1}$
					& $\sqrt[-k]{\|C_2^k\|_1}$ & $\sqrt[-k]{\|G_k\|_1}$ \\[6pt]\hline
				\spacer{4pt}
				1 & $2.609 \times 10^{165}$ & $2.057 \times 10^{17}$ & $3.578 \times 10^{-8}$ & $0.005732$ \\
				2 & $5.107 \times 10^{84}$ & $3.485 \times 10^{9}$ & $1.901 \times 10^{-5}$ & $0.008391$ \\
				3 & $5.105 \times 10^{57}$ & $8.301 \times 10^{6}$ & $0.0001689$ & $0.009814$ \\
				4 & $1.465 \times 10^{44}$ & $4.03 \times 10^{5}$ & $0.000511$ & $0.01074$ \\
				5 & $1.041 \times 10^{36}$ & $6.73 \times 10^{4}$ & $0.0009956$ & $0.01137$ \\
				% 6 & $3.718 \times 10^{30}$ & $2.105 \times 10^{4}$ & $0.001554$ & $0.01183$ \\
				% 7 & $4.67 \times 10^{26}$ & $9413$ & $0.002135$ & $0.01216$ \\
				% 8 & $5.449 \times 10^{23}$ & $5528$ & $0.00271$ & $0.01242$ \\
				% 9 & $2.815 \times 10^{21}$ & $4816$ & $0.003262$ & $0.01262$ \\
				10 & $4.128 \times 10^{19}$ & $4426$ & $0.003784$ & $0.01279$ \\
				100 & $2.768 \times 10^{4}$ & $1190$ & $0.01259$ & $0.01422$ \\\hline
			\end{tabular}
		\end{center}
		\caption{Degree 100 Laguerre Polynomial, 3 iterations}
		\label{tb:laguerre3}
	\end{table}

	\begin{table}
		\begin{center}
			\begin{tabular}{|r|r|r|r|r|r|}
				\hline
				\multicolumn{1}{|c|}{} & \multicolumn{2}{|c|}{$\rho(C_1)=374.984$}
				& \multicolumn{2}{|c|}{$1/\rho(C_2)=0.0143861$} \\\hline
				\spacer{10pt}
				$k$ & $\sqrt[k]{\|C_1^k\|_1}$ & $\sqrt[k]{\|F_k\|_1}$ 
				& $\sqrt[-k]{\|C_2^k\|_1}$ & $\sqrt[-k]{\|G_k\|_1}$ \\[6pt]\hline
				\spacer{4pt}
				1 & $2.609 \times 10^{165}$ & $2.267 \times 10^{4}$ & $3.578 \times 10^{-8}$ & $0.006029$ \\
				2 & $5.107 \times 10^{84}$ & $1.274 \times 10^{4}$ & $1.901 \times 10^{-5}$ & $0.008392$ \\
				3 & $5.105 \times 10^{57}$ & $9931$ & $0.0001689$ & $0.009815$ \\
				4 & $1.465 \times 10^{44}$ & $8231$ & $0.000511$ & $0.01074$ \\
				% 5 & $1.041 \times 10^{36}$ & $7132$ & $0.0009956$ & $0.01137$ \\
				% 6 & $3.718 \times 10^{30}$ & $6273$ & $0.001554$ & $0.01183$ \\
				% 7 & $4.67 \times 10^{26}$ & $5614$ & $0.002135$ & $0.01216$ \\
				% 8 & $5.449 \times 10^{23}$ & $5082$ & $0.00271$ & $0.01242$ \\
				% 9 & $2.815 \times 10^{21}$ & $4634$ & $0.003262$ & $0.01262$ \\
				10 & $4.128 \times 10^{19}$ & $4273$ & $0.003784$ & $0.01279$ \\
				100 & $2.768 \times 10^{4}$ & $769.4$ & $0.01259$ & $0.01422$ \\\hline
			\end{tabular}
		\end{center}
		\caption{Degree 100 Laguerre Polynomial, 20 iterations}
		\label{tb:laguerre20}
	\end{table}

	Table~\ref{tb:toeplitz} shows the characteristic polynomial of the Toeplitz matrix, which has complex eigenvalues with results similar to the Laguerre polynomial.  The same is true of the polynomial with a zero of high multiplicity in Table~\ref{tb:ones}.  Finally, we note that LSR1 did not seem to pose much of a challenge.  The norm of the companion matrices accurately approximates the spectral radii after balancing.  The results were ideal or nearly ideal even without balancing.

	\begin{table}
		\begin{center}
			\begin{tabular}{|r|r|r|r|r|r|}
				\hline
				\multicolumn{1}{|c|}{} & \multicolumn{2}{|c|}{$\rho(C_1)=17.0562$}
					& \multicolumn{2}{|c|}{$1/\rho(C_2)=0.4202$} \\\hline
				\spacer{10pt}
				$k$ & $\sqrt[k]{\|C_1^k\|_1}$ & $\sqrt[k]{\|F_k\|_1}$
					& $\sqrt[-k]{\|C_2^k\|_1}$ & $\sqrt[-k]{\|G_k\|_1}$ \\[6pt]\hline
				\spacer{4pt}
				1 & $2.454 \times 10^{81}$ & $243.7$ & $0.0008301$ & $0.06895$ \\
				2 & $4.954 \times 10^{40}$ & $81.2$ & $0.01189$ & $0.1137$ \\
				3 & $1.939 \times 10^{28}$ & $69.78$ & $0.0364$ & $0.1455$ \\
				4 & $2.29 \times 10^{21}$ & $63.29$ & $0.07098$ & $0.1774$ \\
				5 & $4.07 \times 10^{17}$ & $58.95$ & $0.1113$ & $0.2038$ \\
				% 6 & $6.56 \times 10^{14}$ & $55.87$ & $0.1383$ & $0.2321$ \\
				% 7 & $1.025 \times 10^{13}$ & $53.59$ & $0.1546$ & $0.2561$ \\
				% 8 & $3.25 \times 10^{11}$ & $50.96$ & $0.1716$ & $0.2695$ \\
				% 9 & $2.782 \times 10^{10}$ & $49.36$ & $0.19$ & $0.2791$ \\
				10 & $3.26 \times 10^{9}$ & $47.49$ & $0.2104$ & $0.2885$ \\
				100 & $121.2$ & $20.86$ & $0.3919$ & $0.4057$ \\\hline
			\end{tabular}
		\end{center}
		\caption{Degree 100 Toeplitz Characteristic Polynomial, 20 iterations}
		\label{tb:toeplitz}
	\end{table}

	\begin{table}
		\begin{center}
			\begin{tabular}{|r|r|r|r|r|r|}
				\hline
				\multicolumn{1}{|c|}{} & \multicolumn{2}{|c|}{$\rho(C_1)=1.00$}
					& \multicolumn{2}{|c|}{$1/\rho(C_2)=1.00$} \\\hline
				\spacer{10pt}
				$k$ & $\sqrt[k]{\|C_1^k\|_1}$ & $\sqrt[k]{\|F_k\|_1}$
					& $\sqrt[-k]{\|C_2^k\|_1}$ & $\sqrt[-k]{\|G_k\|_1}$ \\[6pt]\hline
				\spacer{4pt}
				1 & $1.126 \times 10^{15}$ & $105$ & $8.882 \times 10^{-16}$ & $0.009524$ \\
				2 & $2.349 \times 10^{8}$ & $63.79$ & $4.257 \times 10^{-9}$ & $0.01568$ \\
				3 & $1.113 \times 10^{6}$ & $49.65$ & $8.981 \times 10^{-7}$ & $0.02014$ \\
				4 & $6.963 \times 10^{4}$ & $41.15$ & $1.436 \times 10^{-5}$ & $0.0243$ \\
				5 & $1.251 \times 10^{4}$ & $35.65$ & $7.995 \times 10^{-5}$ & $0.02805$ \\
				% 6 & $3850$ & $31.36$ & $0.0002598$ & $0.03189$ \\
				% 7 & $1621$ & $28.06$ & $0.000617$ & $0.03564$ \\
				% 8 & $833$ & $25.4$ & $0.001201$ & $0.03936$ \\
				% 9 & $490$ & $23.16$ & $0.002041$ & $0.04318$ \\
				10 & $317.4$ & $21.35$ & $0.003151$ & $0.04683$ \\
				100 & $4.356$ & $3.755$ & $0.2296$ & $0.2663$ \\\hline
			\end{tabular}
		\end{center}
		\caption{$(x-1)^{50}$, 20 iterations}
		\label{tb:ones}
	\end{table}

	\begin{table}
		\begin{center}
			\begin{tabular}{|r|r|r|r|r|r|}
				\hline
				\multicolumn{1}{|c|}{} & \multicolumn{2}{|c|}{$\rho(C_1)=1.00\times 10^{20}$}
				& \multicolumn{2}{|c|}{$1/\rho(C_2)=1.00\times 10^{-40}$} \\\hline
				\spacer{10pt}
				$k$ & $\sqrt[k]{\|C_1^k\|_1}$ & $\sqrt[k]{\|F_k\|_1}$
						& $\sqrt[-k]{\|C_2^k\|_1}$ & $\sqrt[-k]{\|G_k\|_1}$ \\[6pt]\hline
				\spacer{4pt}
				1 & $2 \times 10^{20}$ & $1 \times 10^{20}$ & $1 \times 10^{-40}$ & $1 \times 10^{-40}$ \\
				2 & $1.414 \times 10^{20}$ & $1 \times 10^{20}$ & $1 \times 10^{-40}$ & $1 \times 10^{-40}$ \\
				3 & $1.26 \times 10^{20}$ & $1 \times 10^{20}$ & $1 \times 10^{-40}$ & $1 \times 10^{-40}$ \\
				4 & $1.189 \times 10^{20}$ & $1 \times 10^{20}$ & $1 \times 10^{-40}$ & $1 \times 10^{-40}$ \\
				5 & $1.149 \times 10^{20}$ & $1 \times 10^{20}$ & $1 \times 10^{-40}$ & $1 \times 10^{-40}$ \\
				% 6 & $1.122 \times 10^{20}$ & $1 \times 10^{20}$ & $1 \times 10^{-40}$ & $1 \times 10^{-40}$ \\
				% 7 & $1.104 \times 10^{20}$ & $1 \times 10^{20}$ & $1 \times 10^{-40}$ & $1 \times 10^{-40}$ \\
				% 8 & $1.091 \times 10^{20}$ & $1 \times 10^{20}$ & $1 \times 10^{-40}$ & $1 \times 10^{-40}$ \\
				% 9 & $1.08 \times 10^{20}$ & $1 \times 10^{20}$ & $1 \times 10^{-40}$ & $1 \times 10^{-40}$ \\
				10 & $1.072 \times 10^{20}$ & $1 \times 10^{20}$ & $1 \times 10^{-40}$ & $1 \times 10^{-40}$ \\
				100 & $1.007 \times 10^{20}$ & $1 \times 10^{20}$ & $1 \times 10^{-40}$ & $1 \times 10^{-40}$ \\\hline
			\end{tabular}
		\end{center}
		\caption{$LSR1$, 20 iterations}
		\label{tb:LSR1}
	\end{table}

	\section{Upper and lower bounds on the distance between $\l_1$ and $\l_n$}
	We use the same notation as in the previous section. Notably, $p(x)$ is a monic polynomial given by
	\begin{equation*}
		p(x) = a_nx^n+a_{n-1}x^{n-1}+ \dots +a_0, \text{~~with~~} a_0 \neq 0 \text{~~and~~} n \geq 2,
	\end{equation*}
	and $q(x)$ is its monic reversal polynomial. Let $\l_1, \l_2, \dots, \l_n$ be the roots of $p(x)$ not necessary distinct and listed in non-decreasing order of their absolute values, i.e.,
	$$|\l_1| \leq |\l_2| \leq \dots \leq |\l_n|.$$
	We let $C_1$ and $C_2$ be companion matrices associated, respectively, with $p(x)$ and $q(x)$. We let $N_1$ and $N_2$ be any two matrix norms (not necessarily distinct) on the set of $n\times n$ complex matrices.

	Two quantities that are useful in the numerical analysis of polynomials are the root spread and the absolute root spread of $p(x)$.  The first is the largest distance between two roots of $p(x)$ and has the form $\max_{i,j} |\l_j - \l_i|$, \cite{ED}.  The second one is also called the modulus root spread of $p(x)$ and consists of the largest difference between two absolute values of the roots of $p(x)$ which is simply $|\l_n| - |\l_1|$. Each of these has an upper bound that is an immediate consequence of Theorem \ref{t1}.
	\begin{theorem}\label{t4}
		For every $(k_1, k_2) \in \mathbb{N}^2$, we have
		\begin{equation}
			|\l_n| - |\l_1| \leq \sqrt[k_1]{N_1(C_1^{k_1})} - \sqrt[-k_2]{N_2(C_2^{k_2})},
		\end{equation}
		and
		\begin{equation}
			\max_{i,j} |\l_j - \l_i| \leq 2 \sqrt[k_1]{N_1(C_1^{k_1})}.
		\end{equation}
	\end{theorem}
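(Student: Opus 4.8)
The plan is to extract both inequalities directly from the two-sided estimate (\ref{ff19}) that underlies Theorem~\ref{t1}. Recall that for every pair $(k_1,k_2)\in\mathbb{N}^2$ and every choice of (submultiplicative) norms $N_1,N_2$ we already have
\[
\sqrt[-k_2]{N_2(C_2^{k_2})} \;\leq\; |\l_1| \;\leq\; |\l_2| \;\leq\; \cdots \;\leq\; |\l_n| \;\leq\; \sqrt[k_1]{N_1(C_1^{k_1})},
\]
where $\sqrt[-k_2]{N_2(C_2^{k_2})}$ denotes $\bigl(N_2(C_2^{k_2})\bigr)^{-1/k_2}$. In particular the largest root in modulus is bounded above by $\sqrt[k_1]{N_1(C_1^{k_1})}$ and the smallest root in modulus is bounded below by $\sqrt[-k_2]{N_2(C_2^{k_2})}$; everything past this point is bookkeeping.

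For the modulus (absolute) root spread I would simply subtract. From $|\l_n|\le \sqrt[k_1]{N_1(C_1^{k_1})}$ and $-|\l_1|\le -\sqrt[-k_2]{N_2(C_2^{k_2})}$, adding the two inequalities gives $|\l_n|-|\l_1|\le \sqrt[k_1]{N_1(C_1^{k_1})}-\sqrt[-k_2]{N_2(C_2^{k_2})}$, which is the first claim. It is worth recording, as part of the same chain, that the right-hand side is itself nonnegative, since it dominates $|\l_n|-|\l_1|\ge 0$; thus the bound is never vacuous.

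For the root spread the $C_2$ side plays no role. Fixing indices $i,j$, the triangle inequality gives $|\l_j-\l_i|\le |\l_j|+|\l_i|$, and since each $|\l_i|\le |\l_n|$ by the ordering of the roots, we obtain $|\l_j-\l_i|\le 2|\l_n|\le 2\sqrt[k_1]{N_1(C_1^{k_1})}$. Taking the maximum over all $i,j$ yields the second claim.

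I do not anticipate a genuine obstacle: the theorem is an immediate corollary of Theorem~\ref{t1}, and the only care needed is to make explicit what $\sqrt[-k_2]{N_2(C_2^{k_2})}$ means and to note that the first bound, while asymptotically sharp by Theorem~\ref{t2} (its right-hand side tends to $|\l_n|-|\l_1|$ as $k_1,k_2\to\infty$), is generally not tight for small powers. One could append a short remark that the same argument with $\sqrt[k_1]{N_1(C_1^{k_1})}$ replaced by any smaller available upper bound for $|\l_n|$ — for instance one of the balanced bounds of Section~3 — immediately sharpens the constant in the second inequality.
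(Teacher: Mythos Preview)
Your argument is correct and is exactly the approach the paper intends: the paper does not give a separate proof but simply states that both inequalities are ``an immediate consequence of Theorem~\ref{t1},'' which is precisely what you have written out (subtracting the two-sided bound for the first inequality, and combining the triangle inequality with $|\lambda_i|\le|\lambda_n|\le\sqrt[k_1]{N_1(C_1^{k_1})}$ for the second).
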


	The next theorem is about the quantity $|\l_n - \l_1|$ which can be bounded based on Theorem \ref{t1}. It can be regarded as an upper bound on the minimal root distance $\Big(\displaystyle{\min_{i,j}|\l_j - \l_i|}\Big)$, which is useful in analyzing the stability and sensitivity of $p(x)$.
	\begin{theorem} \label{t3}
		For every $(k_1, k_2) \in \mathbb{N}^2$, we have the following.
		\begin{enumerate}
			\item If the points corresponding to $\l_1$ and $\l_n$ in the complex plane lie on the same ray extending from the origin, then
			%for every $\xi > 0$, there exists a positive integer $k_0$ such that  for $k\geq k_0$ we have
			\begin{equation}
				|\l_n-\l_1| \leq \sqrt[k_1]{N_1(C_1^{k_1})} ~~- \sqrt[-k_2]{N_2(C_2^{k_2})}.
			\end{equation}
			\item If the points corresponding to  $\l_1, \l_n$ and 0 are collinear, but  $\l_1$ and $\l_n$ lie on opposite rays extending from the origin, then for every $\varepsilon > 0$ there exists a positive integer $k_0$ such that
			\begin{equation*}\
				|\l_n-\l_1| > \sqrt[k]{N_1(C_1^{k})} + \sqrt[-k]{N_2(C_2^{k})} - \varepsilon,
			\end{equation*}
			for every integer $k > k_0$.
			\item If the points corresponding to $\l_1, \l_n$ and $0$ are not collinear, then there exist positive numbers $\gamma_1$ and $\gamma_2$ such that for every $\varepsilon_1 \in [0, \gamma_1)$
			and  every $\varepsilon_2 \in [0, \gamma_2)$, there exists an integer $k_0$ such that
			\begin{equation}\label{ff28}
				\sqrt[k]{N_1(C_1^{k})} - \sqrt[-k]{N_2(C_2^{k})} ~+~ \varepsilon_1 < |\l_n - \l_1| < \sqrt[k]{N_1(C_1^{k})} + \sqrt[-k]{N_2(C_2^{k})} ~-~ \varepsilon_2,
			\end{equation}
			for every integer $k > k_0$.
			\item The above three assertions remain valid if $C_1$ is any nonsingular $n \times n$ complex matrix with $n\geq2$ and $C_2$ is its inverse matrix or any other square complex matrix for which the largest and smallest eigenvalues in absolute values are, respectively, $1/\l_1 \text{~ and ~} 1/\l_n$.
			\end{enumerate}
	\end{theorem}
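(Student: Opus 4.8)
The plan is to derive all four parts from two facts already in hand: the uniform two‑sided estimate $\sqrt[-k]{N_2(C_2^{k})}\le|\l_1|\le|\l_n|\le\sqrt[k]{N_1(C_1^{k})}$ for every $k\in\mathbb{N}$ (Theorem~\ref{t1}), and the Gelfand limits $\sqrt[k]{N_1(C_1^{k})}\to|\l_n|$ and $\sqrt[k]{N_2(C_2^{k})}\to 1/|\l_1|$ (Theorem~\ref{t2}). Since $a_0\ne 0$, no root vanishes, so $1/|\l_1|>0$ and hence $\sqrt[-k]{N_2(C_2^{k})}\to|\l_1|$ as well. Abbreviating $U_k:=\sqrt[k]{N_1(C_1^{k})}$ and $L_k:=\sqrt[-k]{N_2(C_2^{k})}$, both sequences $U_k-L_k$ and $U_k+L_k$ converge, to $|\l_n|-|\l_1|$ and $|\l_n|+|\l_1|$ respectively. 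The geometric observation that organizes the proof is that the three hypotheses of the theorem are exactly the three cases of the triangle inequality for the (possibly degenerate) triangle with vertices $0$, $\l_1$, $\l_n$.

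For part (1), if $\l_1$ and $\l_n$ lie on a common ray from the origin then $\l_n=(|\l_n|/|\l_1|)\,\l_1$, whence $|\l_n-\l_1|=|\l_n|-|\l_1|$; combining with $|\l_n|\le U_{k_1}$ and $L_{k_2}\le|\l_1|$ gives the stated bound (this is the first inequality of Theorem~\ref{t4} specialized by the collinearity assumption). For part (2), if $0,\l_1,\l_n$ are collinear with $\l_1,\l_n$ on opposite rays then $\l_n=-(|\l_n|/|\l_1|)\,\l_1$, so $|\l_n-\l_1|=|\l_n|+|\l_1|=\lim_k(U_k+L_k)$; given $\varepsilon>0$, pick $k_0$ so that $U_k+L_k<|\l_n-\l_1|+\varepsilon$ for all $k>k_0$, which is the assertion.

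For part (3), non‑collinearity means $0,\l_1,\l_n$ form a nondegenerate triangle (note $\l_1\ne 0\ne\l_n$ and $\l_1\ne\l_n$), so both triangle inequalities are strict: $|\l_n|-|\l_1|<|\l_n-\l_1|<|\l_n|+|\l_1|$. I would set $\gamma_1:=|\l_n-\l_1|-(|\l_n|-|\l_1|)>0$ and $\gamma_2:=(|\l_n|+|\l_1|)-|\l_n-\l_1|>0$. Fixing $\varepsilon_1\in[0,\gamma_1)$, the quantity $\gamma_1-\varepsilon_1$ is positive, and since $U_k-L_k\to|\l_n|-|\l_1|$ there is $k_0'$ with $U_k-L_k<(|\l_n|-|\l_1|)+(\gamma_1-\varepsilon_1)$ for $k>k_0'$, that is, $U_k-L_k+\varepsilon_1<|\l_n-\l_1|$. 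Symmetrically, for $\varepsilon_2\in[0,\gamma_2)$ there is $k_0''$ with $U_k+L_k>(|\l_n|+|\l_1|)-(\gamma_2-\varepsilon_2)$ for $k>k_0''$, that is, $|\l_n-\l_1|<U_k+L_k-\varepsilon_2$; taking $k_0:=\max(k_0',k_0'')$, both halves of (\ref{ff28}) hold for $k>k_0$. Part (4) is then immediate, since the arguments above use nothing beyond Theorems~\ref{t1} and~\ref{t2}, which by Remark~\ref{r1} hold for any nonsingular $C_1$ with smallest‑ and largest‑modulus eigenvalues $\l_1,\l_n$ and any $C_2$ whose largest and smallest eigenvalues in absolute value are $1/\l_1$ and $1/\l_n$.

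I expect no serious obstacle; the only point needing care is the bookkeeping in part (3), namely checking that the slacks $\gamma_1-\varepsilon_1$ and $\gamma_2-\varepsilon_2$ really do leave enough room to absorb the (unquantified) rate of convergence in Gelfand's formula, and keeping straight that $k_0$ is permitted to depend on $\varepsilon_1$ and $\varepsilon_2$ but must then work for all larger $k$.
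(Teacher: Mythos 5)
Your proposal is correct and follows essentially the same route as the paper: part (1) from the annulus bound plus the same-ray identity $|\lambda_n-\lambda_1|=|\lambda_n|-|\lambda_1|$, part (2) from Gelfand convergence with the opposite-ray identity, part (3) from the strict triangle inequalities with exactly the paper's choices of $\gamma_1$ and $\gamma_2$ and the same $\varepsilon$-bookkeeping (the paper just parametrizes via $a=\gamma_1-\varepsilon_1$, $b=\gamma_2-\varepsilon_2$), and part (4) from Remark~\ref{r1}. The only cosmetic difference is that in part (2) the paper needs only $U_k\to|\lambda_n|$ together with $L_k\le|\lambda_1|$, while you invoke the limit of $L_k$ as well, which is equally valid.
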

	\begin{proof}
		To relax the notation, we let $U_k = \sqrt[k]{N_1(C_1^{k})}$ and $L_k = \sqrt[-k]{N_2(C_2^{k})}$.  It follows from (\ref{ff24}) that $|\l_n|-|\l_1| \leq U_k - L_k$.  If $\l_1$ and $\l_n$ lie on the same ray extending from the origin, then $|\l_n|-|\l_1| = |\l_n - \l_1|$ and the first assertion follows.

		By (\ref{ff22}), for every positive $\varepsilon$, there exists a positive integer $k_0$ such that $U_k - |\l_n| < \varepsilon$ for every integer $k > k_0$. Combining this with the fact that $|\l_1| \geq L_k$, we obtain $|\l_n| + |\l_1| > U_k + L_k - \varepsilon$. Then the inequality in the second assertion is obtained by using the fact that $|\l_n|+|\l_1| = |\l_n - \l_1|$ when $\l_1, \l_n$ and $0$ are collinear with $0$ lying between $\l_1$ and $\l_n$.

		To prove the third assertion, recall that, in general, $|\l_n|-|\l_1| \leq |\l_n-\l_1| \leq |\l_n| + |\l_1|$ . However, if $\l_1, \l_n$ and $0$ are not collinear in the complex plane, then the inequalities in the above expression become strict, i.e.,
		\begin{equation}\label{ff27}
			|\l_n|-|\l_1| < |\l_n-\l_1| < |\l_n| + |\l_1|.
		\end{equation}
		Note that (\ref{ff27}) is assumed here to be a well-known fact that can be verified by either algebraic or geometric arguments.  Therefore, the numbers $\gamma_1 = |\l_n-\l_1| + |\l_1| - |\l_n|$ and $\gamma_2 = |\l_1| + |\l_n| - |\l_n-\l_1|$ are strictly positive.  Let $a$ be any number in $(0,\gamma_1]$. Then by (\ref{ff22}), there exists a positive integer $k_1$ such that $\big|U_k - |\l_n|\big| < a / 2 ~$ and $~\big||\l_1|-L_k\big| < a / 2~$ for every integer $k>k_1$. Since $U_k \geq |\l_n|$ and $|\l_1| \geq L_k$, it follows that $U_k - |\l_n| < a / 2$ and $ |\l_1|-L_k < a/2$. Consequently we obtain $U_k-L_k - a < |\l_n|-|\l_1| = |\l_n-\l_1| - \gamma_1$ which leads to $U_k-L_k + \gamma_1 - a < |\l_n-\l_1|$. Now set $\varepsilon_1 = \gamma_1 - a$ and notice that since $a\in (0, \gamma_1]$, $\varepsilon_1$ takes all the values in $[0,\gamma_1)$. This is to prove the left inequality in the third assertion.

		To prove the right inequality, let $b$ be any real number in $(0 , \gamma_2]$. There exists a positive integer $k_2$ such that $|\l_1| < L_k + b$ for every integer $k > k_2$. Combine this with the fact that $ |\l_n| \leq U_k$ to get $|\l_n| + |\l_1| < U_k + L_k + b$. Since $|\l_n| + |\l_1| =|\l_n - \l_1| + \gamma_2$, it follows that $|\l_n - \l_1| < U_k + L_k - (\gamma_2 - b)$. Then set $\varepsilon_2 = \gamma_2 - b$ and notice that $\varepsilon_2$ takes all the values in $[0,\gamma_2)$. To complete the proof of the third assertion, take $k_0 = \max \{k_1, k_2\}$.  The fourth assertion follows from Remark \ref{r1}.
	\end{proof}

	\begin{rem}\leavevmode
		\begin{enumerate}
			\item It turns out for some polynomials that $\displaystyle{\max_{ij}|\l_j - \l_i| = |\l_n - \l_1|}$. In this case, Theorem \ref{t3} is providing bounds for the root spread of the polynomial.
			\begin{comment}
			\item Theorem \ref{t3} tells us that if $\l_1$ and $\l_n$ are not aligned with the origin, then the distance $d$ between them remains comprised between\\
			$ b_1 = \displaystyle{\sqrt[k]{N_1(C_1^{k})} - \sqrt[-k]{N_2(C_2^{k})}}$
			and
			$ b_2 =\displaystyle{\sqrt[k]{N_1(C_1^{k})} + \sqrt[-k]{N_2(C_2^{k})}}$.\\
			That is, $b_1 \leq d \leq b2$.
			If $\l_1$ and $\l_n$ lie on the same ray extending from the origin, then the distance $d \leq b1$. However Theorem \ref{t2} implies that $d$ gets closer to $b_1$ as the powers of $C_1$ and $C_2$ increases.
			If $\l_1$ and $\l_n$ lie on opposite rays extending from the origin, then with higher power $k$ of each of $C_1$ and $C_2$, the distance $d$ move closer to $b_2$ but may stay in one side of it $d \geq b_2$, or may change in terms of $k$. This behavior depends on the degree and coefficient of $p(x)$.
			\end{comment}
			\item The bounds given by Theorem \ref{t4} and Theorem \ref{t3} can be improved using Matrix Balancing in the same way as in the previous section.
		\end{enumerate}
	\end{rem}

	\begin{example}\label{ex4}
		From Figure~2, we can see that the root spread of the polynomial $p(x)$, discussed in Example \ref{ex2}, is $|\l_8 -\l_7| = |\l_8-\l_6| = |7-i|\approx 7.07$.
		%From Table~2, an upper bound on $\l_n$ is $\sqrt[64]{N_{\infty}(C_1^{64})} \approx 4.32$.
		We have $$ \max_{ij}{|\l_i-\l_j|}\approx 7.07 \leq 2\sqrt[64]{\|C_1^{64}\|_{\infty}} \approx 8.64,$$ where the value of $\sqrt[64]{N_{\infty}(C_1^{64})}$ is taken from Table~3.  The above inequality is in accordance with Theorem \ref{t4}.  The smallest absolute value is satisfied by four roots: $l_1 = 1, l_2=i, l_3=-i$ and $l_4=-1$.  Neither $\l_2$ nor $\l_3$ lies on the line through $\l_8$ and the origin. Note that $|\l_8 - \l_2| = |\l_8 - \l_3| = \sqrt{17}$ and if we ignore the round-off error, then the third assertion of Theorem \ref{t3} implies
		$$
		\sqrt[64]{\|C_1^{64}\|_{\infty}} - \sqrt[-64]{\|C_2^{64}\|_{\infty}} < \sqrt{17} \approx 4.12 < \sqrt[64]{\|C_1^{64}\|_{\infty}} + \sqrt[-64]{\|C_2^{64}\|_{\infty}}.
		$$
		In fact, from Table~2, we see that
		$$
		\sqrt[64]{\|C_1^{64}\|_{\infty}} - \sqrt[-64]{\|C_2^{64}\|_{\infty}} \approx 3.34,
		\text{~~~and~~~}
		\sqrt[64]{\|C_1^{64}\|_{\infty}} + \sqrt[-64]{\|C_2^{64}\|_{\infty}} \approx 5.3
		$$ \exampleqed
	\end{example}
	\begin{comment}
	~\\
	\noindent
	Here is another example where we consider a polynomial with both real and nonreal coefficients.
	\begin{example}\label{ex5}
~~	
	\exampleqed
	\end{example}
	\end{comment}
	%%%%%%%%%%%%%%%%%%%%%%%%%%%%%%%%%
	%%%%%%%%%%%%%%%%%%%%%%%%%%%%%%%%%
	%%%%%%%%%%%%%%%%%%%%%%%%%%%%%%%%%
	%
		%%%%%%%%%%%%%%%%%%%%%%%%%%%%%%%%%
		%%%%%%%%%%%%%%%%%%%%%%%%%%%%%%%%%
		%%%%%%%%%%%%%%%%%%%%%%%%%%%%%%%%%
		%\section{The convergence rate of Gelfand's formula for companion matrices}
		%The convergence rate of Gelfand's formula is studied within matrix analysis as well as in the larger context of the theory of linear operators.
		%In general diagonalizable matrices have a linear convergence rate if the largest eigenvalues in absolute value is distinct apart from the other %eigenvalues. A better convergence rate may occur if the matrix is normal...
		%%%%%%%%%%%%%%%%%%%%%%%%%%%%%%%%
		%%%%%%%%%%%%%%%%%%%%%%%%%%%%%%%%
		%%%%%%%%%%%%%%%%%%%%%%%%%%%%%%%%
	\section{Comparison with other bounds and localization regions}
	There are many bounds and corresponding regions for the polynomial roots in the literature.
	% However we are not concerned with comparing the bounds presented in our work to others. This is because the precision of the bounds discussed in our work can always be improved by considering higher powers of the companion matrix. While this approach incurs more computational cost, it is theoretically guaranteed by the convergence of Gelfand's formula. Rather
	We shall briefly discuss some natural connection between our work and some existing results that are based on the study of companion matrices.  Three recent articles that address bounds and locations of polynomial roots are \cite{AM, TDP, V}.  Gershgorin's Theorem is a classical and general way to generate localization regions for the eigenvalues of any square complex matrix.  In \cite{AM}, the author uses this Theorem and leverages the structural properties of the companion matrix to obtain a modification of the original form of the Gershgorin regions associated with a companion matrix. To the best of our knowledge, there are no known techniques that exploit the powers of a given matrix $A$ (whether it is a companion matrix or not) to better improve the Gershgorin region of its spectrum.  In contrast, the bounds obtained in \cite{TDP} and \cite{V} use matrix norms of different companion matrices to bound its spectra. In such cases, the norms of the powers of the companion matrices can be beneficial.  In \cite{TDP}, the authors compare the one, infinity and Frobenius norms of different Fiedler companion matrices, and show that in some situations the bounds obtained by considering some Fiedler companion matrices can be sharper than those obtained from Frobenius companion matrices. Here the plural means a set containing the classical Frobenius matrix and all its permutation similarity transformations including its transpose.  Note that Frobenius companion matrices are contained within the set of Fiedler companion matrices which is contained within the wider set of intercyclic companion matrices.  In \cite{V}, the authors show a set of $n$ Frobenius companion matrices denoted $\left\{ L_b ~~|~~ 0 \leq b \leq n-1 \right\}$ that provides the minimum of one, infinity and Frobenius matrix norms taken over the class of all Fiedler companion matrices, \cite[Thoerems 4.1 and 4.2]{V}.  The question related to our work and which arises now is the following.  \textbf{How do the norms obtained from integer powers of all companion matrices compare to each other?}

	Next we show by examples how this question can be answered for two particular polynomials.
	\begin{example}\label{ex6}
		In this example we look at the particular polynomial $p(x)$ provided in \cite[Example 4.3]{V}, to compare numerically $\sqrt[k]{\|C_1^k\|_{\infty}}$ and $\sqrt[k]{\|L_b^k\|_{\infty}}$.  Let $p(x) = x^8 - 0.1 x^7 -0.1 x^6 - 0.3 x^5 - 0.1 x^4 - 0.5 x^3 - 0.1 x^2 - 0.1 x - 0.1.$ The companion matrix given by (\ref{f17}) is
		\begin{equation}
			C_1 ~~ = ~~ \left[
				\begin{array}{c c c c c c c r }
					0 & 0 &   0 &  0  & 0 &   0 &  0  & 0.1\\
					1 & 0 &   0 &  0  & 0 &   0 &  0  & 0.1\\
					0 & 1 &   0 &  0  & 0 &   0 &  0  & 0.1\\
					0 & 0 &   1 &  0  & 0 &   0 &  0  & 0.5\\
					0 & 0 &   0 &  1 &  0 &   0 &  0  & 0.1\\
					0 & 0 &   0 &  0 &  1 &   0 &  0  & 0.3\\
					0 & 0 &   0 &  0 &  0 &   1 &  0  & 0.1\\
					0 & 0 &   0 &  0 &  0 &   0 &  1  & 0.1
				\end{array}	\right]
		\end{equation}
		According to \cite[Theorem 4.2]{V}, the minimum infinity norm among all Fiedler companion matrices is obtained for the matrix $L_5$ given by
		\begin{equation}
			L_5 ~~ = ~~ \left[
				\begin{array}{c c c c c c c r }
					0   & 1   &   0   &  0   & 0   &   0   &  0  & 0 \\
					0   & 0   &   1   &  0   & 0   &   0   &  0  & 0 \\
					0   & 0   &   0   &  1   & 0   &   0   &  0  & 0 \\
					0   & 0   &   0   &  0   & 1   &   0   &  0  & 0 \\
					0   & 0   &   0   &  0   & 0   &   1   &  0  & 0 \\
					0   & 0   &   0   &  0   & 0   &   0.1 &  1  & 0 \\
					0   & 0   &   0   &  0   & 0   &   0.1 &  0  & 1 \\
					0.1 & 0.1 &   0.1 &  0.5 & 0.1 &   0.3 &  0  & 0
				\end{array}	\right]
		\end{equation}
		Indeed, the infinity norm of $L_5$ is less than or equal to that of  $C_1$:
		$$
		\|L_5\|_{\infty} = 1.2 < \|C_1\|_{\infty} = 1.5.
		$$
		The following table shows upper bounds on the roots of $p(x)$ obtained by the infinity norms of some powers of $C_1$ and $L_5$.
		{\fontsize{8}{11}\selectfont
			\begin{center}
				\begin{tabular}{|c|c|c|c|c|c|c|}\hline
					$k$										& $1$ &  $2$     &  $8$    & $16$    & $32$     & $64$      \\ \hline
					$\sqrt[k]{\| C_1^k\|_{\infty}}\approx$  & $1.5$ 	&  $1.2845$     &  $1.1347$ & $1.1347$ & $1.0949$   & $1.0882$  \\ \hline

					%%%%%%%%%%%%%%%%%%%%%%%%%%%%%%%%%%%%%%%%%%%%%%%%%%%%%%%%%%%%%%%%%%%%%%%%%%%%%%%%%%%%%%%%%%%%%%%%%%%%%%%%%%%%%%%%%%%%%%%%% 
					$\sqrt[k]{\| L_5^k\|_{\infty}}\approx$  & $1.2$	&  $1.1446$     &  $1.0918$ & $1.0862$ & $1.0838$   & $1.0826$  \\ \hline
					%%%%%%%%%%%%%%%%%%%%%%%%%%%%%%%%%%%%%%%%%%%%%%%%%%%%%%%%%%%%%%%%%%%%%%%%%%%%%%%%%%%%%%%%%%%%%%%%%%%%%%%%%%%%%%%%%%%%%%%%% 
				\end{tabular}	\\
				\vskip 2mm
				Table~9
			\end{center}
		}

		We can see that for every power $k$ in Table~9, the upper bound given by the infinity norm of $L_5$ is better than that given by $C_1$. This is interesting.
		\exampleqed
	\end{example}
	Let's make the comparison for another polynomial.
	\begin{example}
		Consider  $p(x) = x^8 +8x^7+14x^6-28x^5-81x^4-8x^3-14x^2+28x+80$ from Example \ref{ex2}.
		This polynomial does not satisfy the criterion of \cite[Theorem 4.2]{V} which help to determine exactly which one  among the set of matrices $\{L_b\}$ provides the best minimum norm. We again choose the companion matrix $L_5$ which has the same structure as the one used in the previous example but of course with different entries. We obtain the following table in which we are comparing the one, infinity and Frobenius norms obtained from $C_1$ and $L_5$.
		{\fontsize{8}{11}\selectfont
			\begin{center}
				\begin{tabular}{|c|c|c|c|c|c|c|}\hline
					$k$										 & $1$ &  $2$     &  $8$    & $16$    & $32$     & $64$      \\ \hline
					% 
					% \multirow{6}{3cm}{Upper bounds on $|\l_n| = |\l_8| = 4 $}
					$\sqrt[k]{\| C_1^k\|_{\infty}}\approx$  & $82$ 	&  $26.87$     &  $7.51$ & $5.45$ & $4.67$   & $4.32$  \\ \hline
					%%%%%%%%%%%%%%%%%%%%%%%%%%%%%%%%%%%%%%%%%%%%%%%%%%%%%%%%%%%%%%%%%%%%%%%%%%%%%%%%%%%%%%%%%%%%%%%%%%%%%%%%%%%%%%%%%%%%%%%%%
					$\sqrt[k]{\| C_1^k\|_1}\approx$       	 & $261$	&  $44.12$     &  $8.22$ & $5.70$ & $4.78$   & $4.37$  \\ \hline
					%%%%%%%%%%%%%%%%%%%%%%%%%%%%%%%%%%%%%%%%%%%%%%%%%%%%%%%%%%%%%%%%%%%%%%%%%%%%%%%%%%%%%%%%%%%%%%%%%%%%%%%%%%%%%%%%%%%%%%%%%
					$\sqrt[k]{\| C_1^k\|_F}\approx$       	 & $122.70$ &  $30.97$     &  $7.63$ & $5.49$ & $4.69$   & $4.33$  \\ \hline
					%%%%%%%%%%%%%%%%%%%%%%%%%%%%%%%%%%%%%%%%%%%%%%%%%%%%%%%%%%%%%%%%%%%%%%%%%%%%%%%%%%%%%%%%%%%%%%%%%%%%%%%%%%%%%%%%%%%%%%%%%
					$\sqrt[-k]{\|L_5^k\|_{\infty}} \approx$   & $239$ &  $19.10$     &  $7.03$ & $5.31$ & $4.61$   & $4.29$  \\ \hline
					%%%%%%%%%%%%%%%%%%%%%%%%%%%%%%%%%%%%%%%%%%%%%%%%%%%%%%%%%%%%%%%%%%%%%%%%%%%%%%%%%%%%%%%%%%%%%%%%%%%%%%%%%%%%%%%%%%%%%%%%
					$\sqrt[-k]{\| L_5^k\|_1} \approx $ 	    & $82$ &  $18.49$     &  $6.53$ & $5.11$ & $4.52$   & $4.25$  \\ \hline
					%%%%%%%%%%%%%%%%%%%%%%%%%%%%%%%%%%%%%%%%%%%%%%%%%%%%%%%%%%%%%%%%%%%%%%%%%%%%%%%%%%%%%%%%%%%%%%%%%%%%%%%%%%%%%%%%%%%%%%%%
					$\sqrt[-k]{\| L_5^k\|_F} \approx $ 	    & $122.70$ &  $15.97$     &  $6.57$ & $5.12$ & $4.52$   & $4.25$  \\ \hline
					%%%%%%%%%%%%%%%%%%%%%%%%%%%%%%%%%%%%%%%%%%%%%%%%%%%%%%%%%%%%%%%%%%%%%%%%%%%%%%%%%%%%%%%%%%%%%%%%%%%%%%%%%%%%%%%%%%%%%%%%
				\end{tabular}	\\
				\vskip 2mm
				Table~10
			\end{center}
		}
		\exampleqed
	\end{example}

	From Table~10, we can see again that as $k$ increases, the bounds obtained by using the powers of $L_5$ are better than those obtained by using the powers of $C_1$. Note that the bounds may be improved by the technique of matrix balancing.

	The comparison between the bounds obtained from the powers of $C_1$ and $L_5$ in the previous two examples shows a tendency for relations between the norms to be preserved.  That is, if $C$ and $L$ are two distinct companion matrices for the same polynomial then we might conjecture based on the numerical evidence that if $N(C) \geq N(L)$  then $\sqrt[k]{N(C^k)} \geq \sqrt[k]{N(L^k)}$ for $k > 1$.  If we define the function $b_k$ by
	\begin{equation*}
		b_k(C) = \max\left\{\sqrt[k]{\|C^k\|_1}~,~~ \sqrt[k]{\|C^k\|_{\infty}}~,~~ \sqrt[k]{\|C^k\|_F}\right\},
	\end{equation*}
	we might also conjecture that if $b_1(C) \geq b_1(L)$ then $b_k(C) \geq b_k(L)$ for all $k > 1$.  However, neither of these inequalities hold in general as the following examples show.

\begin{example}
	Consider
	\begin{equation*}
		C =
		\begin{bmatrix}
			0 & 0 & -3/4 \\
			1 & 0 & 1/2 \\
			0 & 1 & -1/2
		\end{bmatrix},\qquad\mbox{and}\qquad
		L = C^\T =
		\begin{bmatrix}
			0 & 1 & 0 \\
			0 & 0 & 1 \\
			-3/4 & 1/2 & -1/2
		\end{bmatrix}.
	\end{equation*}
	It is easily verified that
	\begin{equation*}
		1.75 = \|C\|_1 > \|L\|_1 = 1.5,\qquad\mbox{and}\qquad
		1.458 \approx \sqrt[2]{\|C^2\|_1} < \sqrt[2]{\|L^2\|_1} = 1.5.
	\end{equation*}
\end{example}

\begin{example}
	Consider
	\begin{equation*}
		C =
		\begin{bmatrix}
			0 & 0 & -1/4 \\
			1 & 0 & 1/2 \\
			0 & 1 & 3/4
		\end{bmatrix},\qquad\mbox{and}\qquad
		L =
		\begin{bmatrix}
			0 & 1 & 0 \\
			0 & 3/4 & 1 \\
			-1/4 & 1/2 & 0
		\end{bmatrix}.
	\end{equation*}
	These matrices are distinct companion matrices for the polynomial
	\begin{equation*}
	p(x) = x^3 -\frac{3}{4}x^2 - \frac{1}{2}x + \frac{1}{4}.
	\end{equation*}
	We have
	\begin{equation*}
		\|C\|_1 = 1.5, \qquad
		\|C\|_\infty = 1.75, \qquad
		\|C\|_F\approx 1.696, \qquad
	\end{equation*}
	\begin{equation*}
		\|L\|_1 = 2.25, \qquad
		\|L\|_\infty = 1.75, \qquad
		\|L\|_F\approx 1.696.
	\end{equation*}
	\begin{equation*}
		\sqrt[2]{\|C^2\|_1} \approx 1.225 , \qquad
		\sqrt[2]{\|C^2\|_\infty} \approx 1.677, \qquad
		\sqrt[2]{\|C^2\|_F}\approx 1.322,
	\end{equation*}
	\begin{equation*}
		\sqrt[2]{\|L^2\|_1} = 1.5 , \qquad
		\sqrt[2]{\|L^2\|_\infty} \approx 1.436, \qquad\mbox{and}\qquad
		\sqrt[2]{\|L^2\|_F}\approx 1.376.
	\end{equation*}
	For these matrices we have
	\begin{equation*}
		1.75 = b_1(C) < b_1(L) = 2.25, \qquad\mbox{and}\qquad
		1.677 \approx b_2(C) > b_2(L) = 1.5.
	\end{equation*}
\end{example}

	\section{Conclusion}

	Given a monic complex polynomial $p(x)$, we have seen in this paper how the powers of any of the companion matrices associated with $p(x)$ and its monic reversal polynomial can be used to create an annulus where the roots of $p(x)$ are located. Gelfand's formula ensures that the boundaries of this annulus can be made arbitrarily close to the largest and smallest roots of $p(x)$ in absolute value.  The use of balancing improves the practicality of this approach by delivering greater benefits for smaller powers.  Several ideas and questions arise from this work such as the questions posed in the previous section. The reader may also wonder about the convergence rate of the obtained bounds in terms of the powers of the companion matrices and how it is affected in the case the polynomial has multiple roots. All of these ideas and others are the subject of further research.
	%%%%%%%%%%%%%%%%%%%%%%%%%%%%%%%%%
	%%%%%%%%%%%%%%%%%%%%%%%%%%%%%%%%%
	%%%%%%%%%%%%%%%%%%%%%%%%%%%%%%%%%
	
\end{document}